\numberwithin{equation}{section}
\newtheorem{theorem}{Theorem}[section]	
\newtheorem{proposition}[theorem]{Proposition}	
\newtheorem{corollary}[theorem]{Corollary}	
\newtheorem{lemma}[theorem]{Lemma}
\newtheorem{remark}[theorem]{Remark}
\newcommand{\C}{\mathscr{C}}
\newcommand{\T}{\mathscr{T}}
\newcommand{\F}{\mathscr{F}}
\newcommand{\z}{\mathscr{Z}}
\newcommand{\s}{\mathscr{S}}
\newcommand{\MonPos}{\mathsf{Mon_{\geq 0}}}
\begin{document}

\title{\textsc{The stable category of preordered groups}}
\author{Aline Michel}
\thanks{The author's research is funded by a FRIA doctoral grant of the \emph{Communaut\'e française de Belgique}}
\email{aline.michel@uclouvain.be}
\address{Universit{\'e} Catholique de Louvain, Institut de Recherche en Math{\'e}matique et Physique, Chemin du Cyclotron 2, 1348 Louvain-la-Neuve, Belgium}
\date{}
\begin{abstract}
In this article, we present the stable category of preordered groups associated with some $\z$-pretorsion theory. We first define such a category as well as the related functor, and then study their properties. By doing so, we provide a description of both $\z$-kernels and $\z$-cokernels in the category of preordered groups. Finally, we prove the universal property of the stable category.   
\end{abstract}

\maketitle

\section{Introduction}

A \emph{preordered group} $(G,\le)$ is a group $G = (G,+,0)$ endowed with a preorder relation $\le$ which is compatible with the addition $+$ of $G$, in the sense that, if $a \le c$ and $b \le d$ for $a$, $b$, $c$ and $d$ in $G$, then $a + b \le c + d$. Preordered groups are the objects of a category, the category $\mathsf{PreOrdGrp}$ of preordered groups, whose arrows are the preorder preserving group morphisms. It turns out that this category is isomorphic to another one whose description is as follows:
\begin{itemize}
\item the objects are pairs $(G,P_G)$ (often depicted by means of an inclusion arrow $P_G \rightarrowtail G$) where $G$ is a group and $P_G$ a submonoid of $G$ closed under conjugation in $G$, which is called the \emph{positive cone} of $G$;
\item the arrows $(G,P_G) \rightarrow (H,P_H)$ are given by group morphisms $f \colon G \rightarrow H$ such that $f(P_G) \subseteq P_H$. In this sense, such an arrow can be seen as a pair $(f,\bar{f})$ making the following square commute:
\begin{equation} \label{arrow in PreOrdGrp}
\begin{tikzcd}
P_G \arrow[r,"{\bar{f}}"] \arrow[d,tail]
& P_H \arrow[d,tail]\\
G \arrow[r,"f"']
& H.
\end{tikzcd}
\end{equation}
\end{itemize}

In \cite{CMFM}, some important categorical properties of preordered groups were investigated. In that article, Clementino, Martins-Ferreira and Montoli proved, among other things, that $\mathsf{PreOrdGrp}$ is both complete and cocomplete, and \emph{normal} in the sense of \cite{JZ10}, but neither Barr-exact nor protomodular. The kernel of an arrow $(f,\bar{f})$ as in \eqref{arrow in PreOrdGrp} is given by $((K,P_K),(k,\bar{k}))$ with $k \colon K \rightarrow G$ the kernel of $f$ in the category $\mathsf{Grp}$ of groups and $\bar{k} \colon P_K \rightarrow P_G$ the kernel of $\bar{f}$ in the category $\mathsf{Mon}$ of monoids or, alternatively, $P_K = K \cap P_G$. In order to compute the cokernel of $(f,\bar{f})$, we first need to take the cokernel $(Q,q)$ of $f$ in $\mathsf{Grp}$. The positive cone of $Q$ is then given by $P_Q = q(P_H)$. It was also observed that an arrow $(f,\bar{f})$ as in \eqref{arrow in PreOrdGrp} is a monomorphism precisely when $f$ is injective. It is an epimorphism when $f$ is surjective, and a regular epimorphism when moreover $\bar{f}$ is surjective.

In the paper \cite{GM}, some exactness properties of the category $\mathsf{PreOrdGrp}$ of preordered groups were then studied. In particular, $\mathsf{PreOrdGrp}$ was proven to admit a \emph{pretorsion theory}. For the reader's convenience, let us quickly recall the general definition of this last notion.

A \emph{$\z$-pretorsion theory} \cite{FF,FFG} in an arbitrary category $\C$ is given by a pair $(\T,\F)$ of full and replete subcategories of $\C$, with $\z = \T \cap \F$, such that the following two conditions hold:
\begin{itemize}
\item[(PT1)] any arrow from any $T \in \T$ to any $F \in \F$ factors through an object of $\z$;
\item[(PT2)] for any object $C$ in $\C$, there exists a \emph{short $\z$-exact sequence}
\begin{center}
\begin{tikzcd}
T \arrow[r,"k"]
& C \arrow[r,"q"]
& F
\end{tikzcd}
\end{center}
with $T \in \T$ and $F \in \F$.
\end{itemize}
By a \emph{short $\z$-exact sequence}, we mean a pair $(k,q)$ of composable morphisms in $\C$ such that $k$ is a \emph{$\z$-kernel} of $q$, i.e.
\begin{itemize}
\item $q \cdot k$ factors through an object of $\z$;
\item for any arrow $\alpha \colon X \rightarrow C$ such that $q \cdot \alpha$ factors through an object of $\z$, there exists a unique morphism $\phi \colon X \rightarrow T$ satisfying the identity $k \cdot \phi = \alpha$,
\end{itemize}
and $q$ is a \emph{$\z$-cokernel} of $k$, i.e. 
\begin{itemize}
\item $q \cdot k$ factors through an object of $\z$;
\item for any arrow $\beta \colon C \rightarrow Y$ such that $\beta \cdot k$ factors through an object of $\z$, there exists a unique morphism $\psi \colon F \rightarrow Y$ satisfying the identity $\psi \cdot q = \beta$.
\end{itemize}
\begin{center}
\begin{tikzcd}
T \arrow[rr,"k"]
& 
& C \arrow[rr,"q"] \arrow[dr,"{\beta}"']
&
& F \arrow[dl,dotted,"{\psi}"]\\
& X \arrow[ur,"{\alpha}"'] \arrow[ul,dotted,"{\phi}"]
&
& Y
&
\end{tikzcd}
\end{center}
Note that, when the category $\C$ is pointed and the class $\z$ is reduced to the zero object $0$ of $\C$, we then recover the classical definitions of kernels and cokernels. In this case, a $\z$-pretorsion theory is nothing but a \emph{torsion theory} (see \cite{BG} or \cite{CDT}, for instance). If there is a $\z$-pretorsion theory $(\T,\F)$ in a category $\C$, then the subcategory $\T$ is called a \emph{torsion subcategory} of $\C$ and $\F$ a \emph{torsion-free subcategory}.

As proved in \cite{GM}, there is a pretorsion theory in the category $\mathsf{PreOrdGrp}$ of preordered groups. The torsion subcategory is the category $\mathsf{Grp(PreOrd)}$ of \emph{internal groups in the category of preordered sets}. Its objects are given by those preordered groups whose preorder is moreover symmetric (i.e. an equivalence relation) or, equivalently, by those pairs $(G,P_G)$ for which the positive cone $P_G$ is a group. The torsion-free subcategory, on the other hand, is given by the full subcategory $\mathsf{ParOrdGrp}$ of \emph{partially ordered groups}, that is, of preordered groups endowed with an antisymmetric preorder. These partially ordered groups correspond, via the isomorphism of categories mentioned above, to the pairs $(G,P_G)$ whose positive cone $P_G$ is a \emph{reduced monoid} (which means that the only element in $P_G$ having also its inverse in $P_G$ is the neutral element $0$ of $G$).

\begin{proposition} \label{Z-pretorsion theory} 
The pair $(\mathsf{Grp(PreOrd)},\mathsf{ParOrdGrp})$ of full replete subcategories of $\mathsf{PreOrdGrp}$ is a $\z$-pretorsion theory in $\mathsf{PreOrdGrp}$, where $\z = \mathsf{Grp(PreOrd)} \cap \mathsf{ParOrdGrp}$ is given by
\[\z = \left\{(G,P_G) \in \mathsf{PreOrdGrp} \mid P_G = 0\right\},\]
i.e. by the class of preordered groups endowed with the discrete order.
\end{proposition}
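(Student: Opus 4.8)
The plan is to verify the two pretorsion theory axioms (PT1) and (PT2) directly for the pair $(\mathsf{Grp(PreOrd)},\mathsf{ParOrdGrp})$, after first confirming the claimed description of the intersection $\z$. For the intersection, an object $(G,P_G)$ lies in $\z$ precisely when $P_G$ is simultaneously a group (the $\mathsf{Grp(PreOrd)}$ condition) and a reduced monoid (the $\mathsf{ParOrdGrp}$ condition). If $P_G$ is a group, then every element of $P_G$ has its inverse in $P_G$; reducedness then forces every element to equal $0$, so $P_G = 0$. Conversely $P_G = 0$ is trivially both a group and reduced. This gives $\z = \{(G,P_G) \mid P_G = 0\}$, the discretely ordered groups.

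To verify (PT1), I would take an arbitrary arrow $(f,\bar{f}) \colon (T,P_T) \to (F,P_F)$ with $(T,P_T) \in \mathsf{Grp(PreOrd)}$ (so $P_T$ is a group) and $(F,P_F) \in \mathsf{ParOrdGrp}$ (so $P_F$ is reduced). The key observation is that $\bar{f}$ restricts $f$ to a monoid morphism $P_T \to P_F$; since $P_T$ is a group, its image $\bar{f}(P_T)$ is a subgroup of $P_F$, hence a reduced monoid that is also a group, which by the argument above is trivial. Thus $\bar{f}(P_T) = 0$, meaning the arrow factors through the object $(f(T), 0) \in \z$: explicitly, $(f,\bar{f})$ factors as $(T,P_T) \to (f(T),0) \rightarrowtail (F,P_F)$, where the first component sends $P_T$ to $0$ (legitimate since $\bar f(P_T)=0$) and the second is the inclusion. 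One checks the middle object has trivial positive cone and hence lies in $\z$.

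For (PT2), given an arbitrary $(G,P_G)$ I would construct the short $\z$-exact sequence by exhibiting both the torsion and torsion-free parts from $P_G$ itself. The natural candidate for the torsion part is the \emph{symmetrization}: let $S = P_G \cap (-P_G)$ be the largest subgroup contained in $P_G$; then $(G,S)$ lies in $\mathsf{Grp(PreOrd)}$ and the identity-on-$G$ map $k \colon (G,S) \to (G,P_G)$ is the candidate $\z$-kernel. For the torsion-free quotient, I would quotient $G$ by the normal subgroup $N$ generated by $S$ (the conjugation-closure ensures normality), take $F = G/N$ with positive cone the image of $P_G$, and let $q$ be the projection; one must check this image cone is reduced so that $(F,P_F) \in \mathsf{ParOrdGrp}$. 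I would then verify the two universal properties from the definition of short $\z$-exact sequence: that $k$ is a $\z$-kernel of $q$ and $q$ a $\z$-cokernel of $k$, using the explicit descriptions of $\z$-factorizations (an arrow factors through $\z$ iff its positive-cone component is zero).

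The main obstacle I anticipate is the precise construction of the torsion-free quotient and the verification of its universal property as a $\z$-cokernel. The subtlety is that the naive quotient by $S$ may not directly yield a reduced positive cone, so identifying the correct normal subgroup $N$ to quotient by — and proving that the resulting cone $P_F$ is genuinely reduced while still making $q \cdot k$ factor through $\z$ — requires care. Checking the universal mapping property against all test arrows $\beta$ whose restriction to the cone vanishes, and producing the unique factorization $\psi$, is where the bulk of the genuine work lies; the $\z$-kernel verification for $k$ should be comparatively routine once the description of $\z$-factorizations as "zero on positive cones" is in hand.
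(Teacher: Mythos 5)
Your proposal follows essentially the same route as the paper: the paper (which defers the detailed verification to \cite{GM}) uses the same class $\z$, the same factorization of torsion-to-torsion-free arrows through a discretely ordered image, and the same canonical short $\z$-exact sequence, with torsion part $(G,N_G)$ for $N_G = P_G \cap (-P_G)$ and torsion-free part $(G/N_G,\eta_G(P_G))$. The only remark worth adding is that your anticipated main obstacle dissolves: $S = P_G \cap (-P_G)$ is itself already normal in $G$ (it is a subgroup closed under conjugation because $P_G$ is), and the image cone $\eta_G(P_G)$ in $G/S$ is automatically reduced, since $x,y \in P_G$ with $x+y \in S$ gives $-x = y + (-(x+y)) \in P_G$, hence $x \in S$ and $\eta_G(x) = 0$.
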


The reader can refer to \cite{GM} for a detailed proof of this Proposition. In particular, it is proved in \cite{GM} that the \emph{canonical short $\z$-exact sequence} associated with any preordered group $(G,P_G)$ in the pretorsion theory $(\mathsf{Grp(PreOrd)},\mathsf{ParOrdGrp})$ (i.e. the short $\z$-exact sequence of the second point (PT2) of the definition of a pretorsion theory) is given by
\begin{center}
\begin{tikzcd}
N_G \arrow[r,tail] \arrow[d,tail]
& P_G \arrow[r,two heads,"{\bar{\eta}_G}"] \arrow[d,tail]
& \eta_G(P_G) \arrow[d,tail]\\
G \arrow[r,equal]
& G \arrow[r,two heads,"{\eta_G}"']
& G/N_G
\end{tikzcd}
\end{center}
where $N_G = \{x \in G \mid x \in P_G \ \text{and} \ -x \in P_G\}$ is a normal subgroup of $G$ and $\eta_G \colon G \twoheadrightarrow G/N_G$ the canonical quotient in $\mathsf{Grp}$.

Preordered groups in the class $\z$ are called \emph{$\z$-trivial objects}, and morphisms in $\mathsf{PreOrdGrp}$ that factor through a $\z$-trivial object are then called \emph{$\z$-trivial morphisms}. As shown in \cite{GM}, $\z$-trivial morphisms in this context can be characterized as follows:

\begin{proposition} \label{trivial morphisms}
A morphism $(f,\bar{f}) \colon (G,P_G) \rightarrow (H,P_H)$ in $\mathsf{PreOrdGrp}$ is $\z$-trivial if and only if $\bar{f} = 0$.
\end{proposition}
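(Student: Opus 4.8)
The plan is to prove both implications of the characterization directly, using the explicit description of $\z$-trivial objects provided by Proposition~\ref{Z-pretorsion theory}, namely that $(Z,P_Z) \in \z$ exactly when $P_Z = 0$.

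For the forward direction, suppose $(f,\bar{f})$ is $\z$-trivial, so it factors as $(G,P_G) \xrightarrow{(g,\bar{g})} (Z,P_Z) \xrightarrow{(h,\bar{h})} (H,P_H)$ with $(Z,P_Z) \in \z$. Since an arrow in $\mathsf{PreOrdGrp}$ is a commuting square as in \eqref{arrow in PreOrdGrp}, the restriction to positive cones factors as $\bar{f} = \bar{h} \cdot \bar{g}$, where $\bar{g} \colon P_G \to P_Z$. But $P_Z = 0$, so $\bar{g}$ lands in the trivial monoid and hence $\bar{f} = 0$.

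For the converse, suppose $\bar{f} = 0$, meaning $f(P_G) = 0 \subseteq P_H$. First I would exhibit the factorization object: take the group $G$ equipped with the discrete order, i.e. the object $(G,0) \in \z$. Then $(f,\bar{f})$ factors as
\begin{equation*}
(G,P_G) \xrightarrow{(\mathrm{id}_G,\, \bar{0})} (G,0) \xrightarrow{(f,\, 0)} (H,P_H),
\end{equation*}
where the first arrow is the identity on the underlying group (its restriction to positive cones being the zero map $P_G \to 0$, which is legitimate precisely because any monoid map into $0$ exists uniquely), and the second arrow is $f$ itself, whose restriction to positive cones is the zero map $0 \to P_H$, well-defined since $f(P_G) = 0 \subseteq P_H$. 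Composing recovers $(f,\bar{f})$ on underlying groups, and on positive cones yields the zero map, which equals $\bar{f}$ by hypothesis. Thus $(f,\bar{f})$ factors through the $\z$-trivial object $(G,0)$, so it is $\z$-trivial.

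The argument is essentially bookkeeping about the commutative-square description of arrows, so I expect no serious obstacle; the only point requiring slight care is verifying that the two maps I write down are genuinely morphisms in $\mathsf{PreOrdGrp}$, i.e. that the squares \eqref{arrow in PreOrdGrp} commute and that the positive-cone conditions $\bar{g}(P_G) \subseteq P_Z$ are respected. Both reduce to the trivial observations that every map into $0$ is admissible and that $f(P_G) = 0$ by hypothesis.
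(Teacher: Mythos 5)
Your forward direction is correct and coincides with the paper's argument: any factorization through an object with trivial positive cone forces $\bar{f}$ to factor through the trivial monoid, hence $\bar{f}=0$.

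The converse, however, contains a genuine error, and it sits precisely at the point you dismiss as ``trivial bookkeeping.'' The first arrow of your factorization, $(\mathrm{id}_G,\bar{0}) \colon (G,P_G) \rightarrow (G,0)$, is not a morphism of $\mathsf{PreOrdGrp}$ unless $P_G = 0$. In this category the monoid component is \emph{not} freely choosable: the square \eqref{arrow in PreOrdGrp} must commute, which forces $\bar{g}$ to be the restriction of the underlying group homomorphism $g$ to the positive cones; equivalently, $(g,\bar{g})$ is an arrow if and only if $g(P_G) \subseteq P_Z$. So the fact that ``any monoid map into $0$ exists uniquely'' is irrelevant here. With $g = \mathrm{id}_G$, commutativity of the square says $x = 0$ for every $x \in P_G$, i.e. $P_G \subseteq 0$. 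Concretely, $(\mathrm{id}_{\Z},0) \colon (\Z,\N) \rightarrow (\Z,0)$ is not an arrow of $\mathsf{PreOrdGrp}$: chasing $n \in \N$ around the square gives $n$ one way and $0$ the other. Note that the hypothesis $\bar{f}=0$ only gives you $P_G \subseteq \mathsf{Ker}(f)$, not $P_G = 0$, so your first arrow fails in general.

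The repair requires changing the middle object so that the underlying group map of the first arrow genuinely kills $P_G$. This is what the paper does: take the (regular epi, mono)-factorization $G \twoheadrightarrow f(G) \rightarrowtail H$ of $f$ in $\mathsf{Grp}$ and factor $(f,\bar{f})$ through $(f(G),0) \in \z$. Since $\bar{f}=0$ forces $f(x)=0$ for all $x \in P_G$, the regular epi $G \twoheadrightarrow f(G)$ does send $P_G$ to $0$, so both squares commute. (An alternative in the same spirit would be to factor through $(G/M_G,0)$, where $M_G$ is the normal closure of $P_G$ in $G$, using $P_G \subseteq \mathsf{Ker}(f)$ and normality of kernels.) Either way, the middle object cannot be $(G,0)$ with the identity as first arrow.
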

Indeed, if $(f,\bar{f})$ is $\z$-trivial then, by definition, $(f,\bar{f})$ factors through a $\z$-trivial object $(A,0)$:
\begin{center}
\begin{tikzcd}[row sep=small,column sep=small]
P_G \arrow[rr,"\bar{f}"] \arrow[dr] \arrow[ddd,tail]
& & P_H \arrow[ddd,tail]\\
 & 0 \arrow[d,tail] \arrow[ur] & \\
 & A \arrow[dr] & \\
G \arrow[ur] \arrow[rr,"f"']
& & H.
\end{tikzcd}
\end{center}
In particular $\bar{f} = 0$. Conversely, consider in the category $\mathsf{Grp}$ of groups the (regular epi, mono)-factorization of $f$. Then, by assumption, the following diagram commutes:
\begin{center}
\begin{tikzcd}
P_G \arrow[rr,"\bar{f}"] \arrow[dr] \arrow[ddd,tail]
& & P_H \arrow[ddd,tail]\\
 & 0 \arrow[d,tail] \arrow[ur] & \\
 & f(G) \arrow[dr,tail] & \\
G \arrow[ur,two heads] \arrow[rr,"f"']
& & H.
\end{tikzcd}
\end{center}
Accordingly, $(f,\bar{f})$ factors through $(f(G),0) \in \z$, i.e. it is a $\z$-trivial morphism.

The purpose of the article is to answer the following question: what is the ``universal'' torsion theory associated with the above pretorsion theory? More precisely, we need to find a  pointed category $\s$, which will be called the \emph{stable category} of $\mathsf{PreOrdGrp}$, containing a torsion theory $(\T,\F)$, as well as a functor $\Phi$ from $\mathsf{PreOrdGrp}$ to $\s$ which is a \emph{torsion theory functor}, i.e.
\begin{enumerate}
\item $\Phi(G,P_G) \in \T$ for any $(G,P_G) \in \mathsf{Grp(PreOrd)}$ and $\Phi(G,P_G) \in \F$ for any $(G,P_G) \in \mathsf{ParOrdGrp}$;
\item for any preordered group $(G,P_G)$, $\Phi$ sends the canonical short $\z$-exact sequence associated with $(G,P_G)$ in the pretorsion theory $(\mathsf{Grp(PreOrd)},\mathsf{ParOrdGrp})$ to a short exact sequence in $\s$.
\end{enumerate}
In particular, this functor $\Phi$ must send any $\z$-trivial object to the zero object in the stable category $\s$ and then any $\z$-trivial morphism to the zero morphism.

\begin{remark}
\emph{Such a construction was initially realized by Facchini and Finocchiaro in \cite{FF}, in the context of preordered sets. Borceux, Campanini and Gran then extended this construction to the setting of internal preorders in a \emph{coherent category} (see articles \cite{BCG} and \cite{BCG22}). 
In the more recent article \cite{BCG22'}, the results were then extended to the context of \emph{lextensive categories}. The context and universal property considered in the present article are nevertheless very different from those mentioned above.
}
\end{remark}

\section{Definition of the stable category of $\mathsf{PreOrdGrp}$}

In order to define the stable category of $\mathsf{PreOrdGrp}$, we first recall two well-known results about monoids, already discussed in \cite{CMFM}.

\begin{lemma} \label{lemma1 Mon}
If a monoid $M$ is embeddable in a group $G$, then it is embeddable in its \emph{group completion} (also called \emph{Grothendieck group}) $grp(M)$. 
\end{lemma}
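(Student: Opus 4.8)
The plan is to obtain the statement as a direct consequence of the universal property characterizing $grp(M)$ as the value at $M$ of the left adjoint to the forgetful functor $\mathsf{Grp} \to \mathsf{Mon}$. Concretely, this amounts to a canonical monoid homomorphism $\eta_M \colon M \to grp(M)$ (the unit of the adjunction) which is universal in the following sense: for every group $H$ and every monoid homomorphism $f \colon M \to H$, there is a unique group homomorphism $\tilde{f} \colon grp(M) \to H$ with $\tilde{f} \circ \eta_M = f$. The embedding we are looking for will simply be $\eta_M$ itself, so the entire content of the lemma reduces to proving that $\eta_M$ is injective.

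To this end I would first rephrase the hypothesis as the data of an injective monoid homomorphism $\iota \colon M \rightarrowtail G$ into some group $G$. Applying the universal property to $\iota$ produces a (unique) group homomorphism $\tilde{\iota} \colon grp(M) \to G$ satisfying $\tilde{\iota} \circ \eta_M = \iota$. The only real step is then to transport injectivity across this factorization: since the composite $\iota = \tilde{\iota} \circ \eta_M$ is injective, its first factor $\eta_M$ must be injective as well. Hence $\eta_M \colon M \to grp(M)$ is an embedding, which is exactly the assertion of the lemma.

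I do not expect a genuine obstacle here, as the argument is purely formal once the universal property is in hand; the construction of $grp(M)$ for an arbitrary (possibly non-commutative) monoid and the verification that $\eta_M$ is a monoid homomorphism are standard and may be quoted. The one point worth emphasizing is conceptual rather than technical: the role of the hypothesis is precisely to let us discard the given, possibly unwieldy, ambient group $G$ in favour of the canonical group $grp(M)$, which depends only on $M$.
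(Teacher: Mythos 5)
Your proof is correct and takes essentially the same route as the paper: both apply the universal property of $grp(M)$ to the given embedding $M \rightarrowtail G$ to obtain a group homomorphism $grp(M) \to G$ factoring the embedding through the canonical map $M \to grp(M)$, and then conclude that the canonical map is injective because the composite is. There is no meaningful difference between the two arguments.
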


\begin{proof}
Consider the canonical morphism $j \colon M \rightarrow grp(M)$ as well as the injection $i \colon M \rightarrowtail G$ to the group $G$:
\begin{center}
\begin{tikzcd}
M \arrow[rr,"j"] \arrow[dr,tail,"i"']
& & grp(M) \arrow[dl,dotted,"\phi"]\\
 & G. &
\end{tikzcd}
\end{center}
By the universal property of the Grothendieck group of $M$, there exists a unique group homomorphism $\phi \colon grp(M) \rightarrow G$ such that $\phi \cdot j = i$ in the category $\mathsf{Mon}$ of monoids. Since $i$ is a monomorphism, it then easily follows that also $j$ is a monomorphism, hence $M$ is embeddable in $grp(M)$.
\end{proof}

\begin{lemma} \label{lemma2 Mon}
Let $M$ be a monoid which is embeddable in a group. Then, $M$ is the positive cone of a group $G$ if and only if, for any $a, b \in M$, there exist $x, y \in M$ such that $a + b = b + x = y + a$.
\end{lemma}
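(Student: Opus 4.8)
The plan is to realize $M$ as a positive cone inside its own group completion $grp(M)$. By Lemma \ref{lemma1 Mon}, the hypothesis that $M$ embeds in some group guarantees that the canonical morphism $j \colon M \to grp(M)$ is injective, so I may identify $M$ with the submonoid $j(M)$ of $G := grp(M)$. It then remains only to decide whether this submonoid is closed under conjugation in $G$, since a positive cone is by definition precisely a conjugation-closed submonoid. The whole statement therefore reduces to comparing the displayed cancellation condition with conjugation-closure, after recognizing that the two equalities $a + b = b + x$ and $a + b = y + a$ are exactly the assertions $x = -b + a + b \in M$ and $y = a + b - a \in M$, i.e. that certain conjugates of elements of $M$ by elements of $M$ (and their inverses) again lie in $M$.

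For the forward implication, I would assume $M = P_G$ is the positive cone of a group $G$ and simply read off the witnesses. Given $a, b \in M$, the element $y := a + b - a$ is the conjugate of $b$ by $a \in G$ and the element $x := -b + a + b$ is the conjugate of $a$ by $-b \in G$; since a positive cone is closed under conjugation by every element of $G$, both $x$ and $y$ lie in $M$, and by construction $a + b = y + a$ and $a + b = b + x$. This direction requires no work beyond unwinding the definitions.

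The content is in the converse. Assuming the cancellation condition, I want to show that $M = j(M)$ is closed under conjugation in $G = grp(M)$. The key reduction is that it suffices to check closure under conjugation by the \emph{generators} of $G$, namely by the elements $a$ and $-a$ for $a \in M$: conjugation by a product factorizes as iterated conjugation by the factors (one has $g_1 + g_2 + m - g_2 - g_1 = g_1 + (g_2 + m - g_2) - g_1$), and every element of the group completion is a finite product of elements of $j(M)$ and their inverses. Closure under each type of generator is then exactly one clause of the hypothesis: applying the condition to the pair $(a,m)$ gives the witness $y = a + m - a \in M$, and applying it to the pair $(m,a)$ gives the witness $x = -a + m + a \in M$. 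Combining these with the reduction yields closure under conjugation by an arbitrary $g \in G$, so $M$ is a positive cone of $G$.

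I expect the main obstacle to be this reduction step in the converse, i.e. justifying that conjugation-closure by all of $grp(M)$ follows from closure by the monoid generators and their inverses. This rests on the description of elements of the group completion as words in $j(a)^{\pm 1}$, together with an induction on word length using the factorization of conjugation through a product; the one point to treat with care is that the hypothesis must supply \emph{both} $a + m - a \in M$ and $-a + m + a \in M$, which is why both equalities in the cancellation condition are genuinely needed.
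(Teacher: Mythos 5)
Your proof is correct and follows essentially the same route as the paper's: the forward direction reads off the witnesses $x = -b+a+b$ and $y = a+b-a$ from conjugation-closure, and the converse establishes that $M$ is closed under conjugation in $grp(M)$ by extracting the two single-generator conjugations $a+m-a \in M$ and $-a+m+a \in M$ from the hypothesis and then inducting over a word decomposition of an arbitrary element of $grp(M)$ (the paper does this induction concretely on alternating words $x_1 - x_2 + \cdots + x_{n-1} - x_n$, while you phrase it as reduction to generators, but the computation is identical). Your closing remark that both clauses of the hypothesis are genuinely needed matches exactly how the paper uses them.
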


\begin{proof}
If $M$ is the positive cone of a group $G$, then, for any $a, b \in M$, $-b + a + b \in M$ and $a + b - a \in M$ (since $M$ is closed under conjugation in $G$). By taking $x = -b + a + b$ and $y = a + b - a$, we can then check that $a + b = b + x = y + a$.

Assuming now that this last condition holds, let us prove that $M$ is the positive cone of its group completion $grp(M)$. Let $m \in M$ and let $x \in grp(M)$, that is, $x = x_1 - x_2 + \cdots + x_{n-1} - x_n$ for $x_1,\cdots,x_n \in M$. Then,
\[x + m - x = x_1 - x_2 + \cdots + x_{n-1} - x_n + m + x_n - x_{n-1} + \cdots + x_2 - x_1.\]
We observe that $-x_n + m + x_n \in M$. Indeed, by assumption, there exist $a_1, b_1 \in M$ such that $m + x_n = x_n + a_1 = b_1 + m$ so that, in particular, $-x_n + m + x_n = a_1 \in M$. We next notice that $x_{n-1} + a_1 - x_{n-1} \in M$ since, by hypothesis, $x_{n-1} + a_1 = a_1 + b_2 = a_2 + x_{n-1}$ for some $a_2, b_2 \in M$, which implies that $x_{n-1} + a_1 - x_{n-1} = a_2 \in M$, that is, $x_{n-1} - x_n + m + x_n - x_{n-1} \in M$. We then proceed by induction and show that $x + m - x \in M$. This proves that $M$ is closed under conjugation in $grp(M)$. Since $M$ is embeddable in a group, it is therefore embeddable in $grp(M)$ thanks to Lemma \ref{lemma1 Mon}. As a conclusion, $(grp(M),M) \in \mathsf{PreOrdGrp}$. 
\end{proof}

Consider then the full subcategory $\MonPos$ of the category $\mathsf{Mon}$ of monoids, whose objects are all monoids $M$ which are embeddable in a group and such that, for any $a, b \in M$, there exist $x, y \in M$ for which the identity $a + b = b + x = y + a$ holds. 

Consider also the functor $P \colon \mathsf{PreOrdGrp} \rightarrow \MonPos$ defined, for any preordered group $(G,P_G)$, by $P(G,P_G) = P_G$ and, for any morphism $(f,\bar{f})$ of preordered groups, by $P(f, \bar{f}) = \bar{f}$. This is the \emph{positive cone functor}. Note that, thanks to Lemmas \ref{lemma1 Mon} and \ref{lemma2 Mon}, this functor is actually well-defined.

We will prove in the subsequent sections that the category $\MonPos$ is the stable category of $\mathsf{PreOrdGrp}$.

\section{$\z$-kernels and $\z$-cokernels in $\mathsf{PreOrdGrp}$}

In this section, we present an explicit description of $\z$-kernels and $\z$-cokernels in the category $\mathsf{PreOrdGrp}$ of preordered groups.

\subsection{Description of $\z$-kernels and $\z$-cokernels in $\mathsf{PreOrdGrp}$}

\begin{proposition} \label{Z-kernels}
Let $(f,\bar{f}) \colon (G,P_G) \rightarrow (H,P_H)$ be a morphism in $\mathsf{PreOrdGrp}$. Then, a $\z$-kernel of $(f,\bar{f})$ is given by the morphism 
\[(k,\bar{k}) \colon (G,P_G \cap \mathsf{Ker}(f)) \rightarrow (G,P_G)\]
where $k$ is the identity morphism $1_G$. Note that $P_G \cap \mathsf{Ker}(f) = \mathsf{Ker}(\bar{f})$ in the category $\mathsf{Mon}$ of monoids.
\end{proposition}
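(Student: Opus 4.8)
The plan is to verify directly that $(k,\bar{k})$ satisfies the two defining conditions of a $\z$-kernel of $(f,\bar{f})$, as recalled in the introduction. We are told that $k = 1_G$ and that the positive cone of the source is $P_G \cap \mathsf{Ker}(f)$; the first thing I would do is confirm that this is a legitimate object of $\mathsf{PreOrdGrp}$, i.e.\ that $P_G \cap \mathsf{Ker}(f)$ is a submonoid of $G$ closed under conjugation, and that $(k,\bar{k})$ is a genuine morphism, namely that $\bar{k} = 1_{P_G \cap \mathsf{Ker}(f)}$ corestricts the inclusion $P_G \cap \mathsf{Ker}(f) \hookrightarrow P_G$. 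I would also record the remark that $P_G \cap \mathsf{Ker}(f) = \mathsf{Ker}(\bar{f})$, since an element $x \in P_G$ lies in $\mathsf{Ker}(\bar{f})$ exactly when $\bar{f}(x) = f(x) = 0$, i.e.\ when $x \in \mathsf{Ker}(f)$.

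Next I would establish the first condition: $(f,\bar{f}) \cdot (k,\bar{k})$ factors through a $\z$-trivial object. By Proposition~\ref{trivial morphisms}, it suffices to check that the positive-cone component is zero. That component is $\bar{f} \cdot \bar{k}$, the restriction of $\bar{f}$ to $P_G \cap \mathsf{Ker}(f)$, which is identically $0$ since every element of $P_G \cap \mathsf{Ker}(f)$ is sent to $0$ by $f$ (hence by $\bar{f}$). So the composite is $\z$-trivial, and this step is essentially immediate once the identification with $\mathsf{Ker}(\bar{f})$ is in hand.

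The substantive part is the universal property. Given any $(\alpha,\bar\alpha)\colon (X,P_X) \rightarrow (G,P_G)$ with $(f,\bar f)\cdot(\alpha,\bar\alpha)$ being $\z$-trivial, I must produce a unique $(\phi,\bar\phi)\colon (X,P_X) \rightarrow (G,P_G\cap\mathsf{Ker}(f))$ with $(k,\bar k)\cdot(\phi,\bar\phi) = (\alpha,\bar\alpha)$. Since $k = 1_G$, uniqueness is forced at the level of the underlying group morphism: necessarily $\phi = \alpha$. The real content is to verify that $\alpha$ actually corestricts to a morphism into the smaller positive cone, i.e.\ that $\bar\alpha(P_X) \subseteq P_G \cap \mathsf{Ker}(f)$. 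The inclusion into $P_G$ holds because $(\alpha,\bar\alpha)$ is a morphism; the inclusion into $\mathsf{Ker}(f)$ is exactly where I would use the hypothesis that $(f,\bar f)\cdot(\alpha,\bar\alpha)$ is $\z$-trivial. By Proposition~\ref{trivial morphisms} this means $\bar f \cdot \bar\alpha = 0$, so $f(\alpha(p)) = 0$ for every $p \in P_X$, giving $\bar\alpha(p) \in \mathsf{Ker}(f)$ as required. Thus $\bar\phi = \bar\alpha$ corestricted is well-defined, and the factorization identity is satisfied.

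I expect the main obstacle to be a conceptual subtlety rather than a computation: one must be careful that the universal property is tested against \emph{all} arrows $\alpha$ whose composite with $q$ (here $(f,\bar f)$) is merely $\z$-trivial, not just the zero arrows, and the key insight is that for the identity-on-$G$ map the entire burden of the $\z$-kernel condition is transported onto the positive cones via the characterization in Proposition~\ref{trivial morphisms}. Once that reduction is clear, uniqueness is automatic (because $k$ is an identity on the group component, so $\phi$ is determined, and the positive-cone component is likewise determined as the corestriction of $\bar\alpha$), and the whole argument becomes a matter of chasing the condition $\bar f \cdot \bar\alpha = 0$. I would close by remarking that this confirms $(k,\bar k)$ is indeed a $\z$-kernel, and that the description of the positive cone as $\mathsf{Ker}(\bar f)$ matches the monoid-level kernel computed in $\mathsf{Mon}$.
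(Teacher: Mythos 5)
Your proposal is correct and follows essentially the same route as the paper's proof: both reduce $\z$-triviality to the condition $\bar{f}\cdot\bar{\alpha}=0$ via Proposition~\ref{trivial morphisms}, force $\phi=\alpha$ from $k=1_G$, and obtain $\bar{\phi}$ as the (co)restriction of $\bar{\alpha}$ to $\mathsf{Ker}(\bar{f})=P_G\cap\mathsf{Ker}(f)$ --- the paper phrases this last step via the universal property of kernels in $\mathsf{Mon}$, which is the same map you construct elementwise. Your additional check that $P_G\cap\mathsf{Ker}(f)$ is a conjugation-closed submonoid, so that the source is a genuine object of $\mathsf{PreOrdGrp}$, is a harmless (and welcome) supplement the paper leaves implicit.
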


\begin{proof}
We first compute that $\bar{f} \cdot \bar{k} = 0$ since $\bar{k}$ is the kernel of $\bar{f}$ in $\mathsf{Mon}$. It then follows that $(f,\bar{f}) \cdot (k,\bar{k})$ is a $\z$-trivial morphism thanks to Proposition \ref{trivial morphisms}.

We next consider a morphism $(\alpha,\bar{\alpha}) \colon (A,P_A) \rightarrow (G,P_G)$ in $\mathsf{PreOrdGrp}$ such that $(f,\bar{f}) \cdot (\alpha,\bar{\alpha})$ is $\z$-trivial, i.e. such that $\bar{f} \cdot \bar{\alpha} = 0$ thanks to Proposition \ref{trivial morphisms}.
\begin{center}
\begin{tikzcd}
\mathsf{Ker}(\bar{f}) \arrow[rr,tail,"\bar{k}"] \arrow[ddd,tail,"\psi"']
& & P_G \arrow[rr,"\bar{f}"] \arrow[ddd,tail,"g"]
& & P_H \arrow[ddd,tail]\\
 & P_A \arrow[d,tail,"a"] \arrow[ur,"\bar{\alpha}"] \arrow[ul,dotted,"\bar{\phi}"']
& & & \\
 & A \arrow[dl,dotted,"\phi"] \arrow[dr,"\alpha"']
& & & \\
G \arrow[rr,equal,"{k = 1_G}"']
& & G \arrow[rr,"f"']
& & H
\end{tikzcd}
\end{center}
By the universal property of kernels in $\mathsf{Mon}$ there exists a unique morphism $\bar{\phi} \colon P_A \rightarrow \mathsf{Ker}(\bar{f})$ such that $\bar{k} \cdot \bar{\phi} = \bar{\alpha}$. Now, we put $\phi = \alpha$ since it is the only possible group morphism $\phi \colon A \rightarrow G$ with $k \cdot \phi = \alpha$. We moreover compute that
\[\phi \cdot a = k \cdot \phi \cdot a = \alpha \cdot a = g \cdot \bar{\alpha} = g \cdot \bar{k} \cdot \bar{\phi} = k \cdot \psi \cdot \bar{\phi} = \psi \cdot \bar{\phi},\]
which means that $(\phi,\bar{\phi}) \colon (A,P_A) \rightarrow (G,\mathsf{Ker}(\bar{f}))$ is a morphism in $\mathsf{PreOrdGrp}$. It is obviously unique with the property $(k,\bar{k}) \cdot (\phi,\bar{\phi}) = (\alpha,\bar{\alpha})$.
\end{proof}

\begin{proposition} \label{Z-cokernels}
Let $(f,\bar{f}) \colon (G,P_G) \rightarrow (H,P_H)$ be a morphism in $\mathsf{PreOrdGrp}$ and denote by $S$ the normal closure of $\bar{f}(P_G)$ in $H$, i.e.
\[S = \{s_1 - s_2 + \cdots - s_n \mid s_i = h_i + f(g_i) - h_i, \ \text{with} \ h_i \in H \ \text{and} \ g_i \in P_G, \ \text{for any} \ i \in \{1,\cdots,n\}\}.\]
If we write $q \colon H \twoheadrightarrow H/S$ for the quotient morphism, then the arrow
\[(q,\bar{q}) \colon (H,P_H) \twoheadrightarrow (H/S,q(P_H))\]
is a $\z$-cokernel of $(f,\bar{f})$.
\end{proposition}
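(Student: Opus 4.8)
The plan is to verify the two defining conditions of a $\z$-cokernel directly, using Proposition \ref{trivial morphisms} to translate $\z$-triviality into the vanishing of the relevant monoid morphism, and then to reduce the universal property to that of the ordinary quotient $q$ in $\mathsf{Grp}$. Throughout I would keep in mind that $\bar{f}$, $\bar{q}$ and any comparison $\bar{\beta}$ are just the restrictions of $f$, $q$ and $\beta$ to the respective positive cones.

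First I would observe that $(q,\bar{q})$ is a genuine morphism of $\mathsf{PreOrdGrp}$: since $q \colon H \twoheadrightarrow H/S$ is surjective and $P_H$ is a submonoid of $H$ closed under conjugation, its image $q(P_H)$ is again a submonoid of $H/S$ closed under conjugation (for $q(h) + q(p) - q(h) = q(h+p-h)$), so $(H/S, q(P_H))$ is a well-defined object and $\bar{q}$ lands in $q(P_H)$. For the first condition, I would note that $f(P_G) \subseteq S$ (take $n=1$ and $h_1 = 0$ in the description of $S$), so that $\bar{q} \cdot \bar{f} = (q \cdot f)|_{P_G} = 0$; by Proposition \ref{trivial morphisms} this shows that $(q,\bar{q}) \cdot (f,\bar{f})$ is $\z$-trivial.

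For the universal property, I would consider any morphism $(\beta, \bar{\beta}) \colon (H, P_H) \rightarrow (Y, P_Y)$ for which $(\beta, \bar{\beta}) \cdot (f, \bar{f})$ is $\z$-trivial. By Proposition \ref{trivial morphisms} this means $\bar{\beta} \cdot \bar{f} = 0$, i.e. $f(P_G) \subseteq \mathsf{Ker}(\beta)$. Since $\mathsf{Ker}(\beta)$ is a normal subgroup of $H$ containing $f(P_G)$, and $S$ is by construction the smallest such normal subgroup, we get $S \subseteq \mathsf{Ker}(\beta)$. The universal property of the quotient $q$ in $\mathsf{Grp}$ then yields a unique group morphism $\psi \colon H/S \rightarrow Y$ with $\psi \cdot q = \beta$. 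To promote $\psi$ to a morphism of preordered groups it remains to check $\psi(q(P_H)) \subseteq P_Y$; but $\psi(q(P_H)) = \beta(P_H) = \bar{\beta}(P_H) \subseteq P_Y$ since $(\beta, \bar{\beta})$ is already a morphism of $\mathsf{PreOrdGrp}$. Writing $\bar{\psi}$ for the induced restriction $q(P_H) \rightarrow P_Y$, we obtain the required factorization $(\psi, \bar{\psi}) \cdot (q, \bar{q}) = (\beta, \bar{\beta})$, and its uniqueness follows from the surjectivity of $q$.

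The main point to appreciate — rather than a genuine obstacle — is the role of the normal closure in the definition of $S$. Because the kernel of an arbitrary comparison morphism $\beta$ is only guaranteed to be a \emph{normal} subgroup, the object through which we quotient must absorb not merely $f(P_G)$ but all its conjugates. Taking $S$ to be exactly the normal closure of $\bar{f}(P_G)$ is precisely what makes it the smallest normal subgroup through which every such $\beta$ factors, and hence what makes $(q,\bar{q})$ universal; this is also what distinguishes the $\z$-cokernel from the ordinary cokernel in $\mathsf{PreOrdGrp}$, which instead kills the normal closure of the whole image $f(G)$.
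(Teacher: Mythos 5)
Your proof is correct and follows essentially the same route as the paper's: $\z$-triviality of the composite is checked via Proposition \ref{trivial morphisms}, and the universal property is reduced to that of the group quotient $q$ in $\mathsf{Grp}$, with the positive-cone component handled afterwards. The only cosmetic differences are that you invoke minimality of the normal closure where the paper verifies $\alpha(s)=0$ element-by-element (the same fact, proved concretely), and that you obtain the monoid component $\bar{\psi}$ as a restriction of $\psi$ rather than via the (regular epi, mono) orthogonality in $\mathsf{Mon}$; both are sound.
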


\begin{proof}
For any $x \in P_G$, $f(x) \in \bar{f}(P_G) \subseteq S$, so that $q(f(x)) = 0$ since $S$ is the kernel of $q$ in the category $\mathsf{Grp}$ of groups. This implies that $\bar{q} \cdot \bar{f} = 0$, which means that $(q,\bar{q}) \cdot (f,\bar{f})$ is a $\z$-trivial morphism.

Consider then a morphism $(\alpha,\bar{\alpha}) \colon (H,P_H) \rightarrow (A,P_A)$ in $\mathsf{PreOrdGrp}$ such that $(\alpha,\bar{\alpha}) \cdot (f,\bar{f})$ is $\z$-trivial, that is, such that $\bar{\alpha} \cdot \bar{f} = 0$.
\begin{center}
\begin{tikzcd}
P_G \arrow[rrr,"\bar{f}"] \arrow[dr,two heads] \arrow[ddd,tail]
& & & P_H \arrow[ddd,tail] \arrow[rr,two heads,"\bar{q}"] \arrow[dr,"\bar{\alpha}"]
& & q(P_H) \arrow[dl,dotted,"\bar{\phi}"'] \arrow[ddd,tail,"\psi"]\\
 & \bar{f}(P_G) \arrow[urr,tail] \arrow[dr,tail]
& & & P_A \arrow[d,tail,"a"]
& \\
 & & S \arrow[dr,tail,"t"] 
& & A 
& \\
G \arrow[rrr,"f"']
& & & H \arrow[ur,"\alpha"'] \arrow[rr,two heads,"q"']
& & H/S \arrow[ul,dotted,"\phi"]
\end{tikzcd}
\end{center}
Let $s \in S$. Then, there exist $h_i \in H$ and $g_i \in P_G$ such that 
\[s = s_1 - s_2 + \cdots - s_n\]
where $s_i = h_i + f(g_i) - h_i$ for any $i \in \{1,\cdots,n\}$. One then computes that, for any $i \in \{1,\cdots,n\}$, 
\[\alpha(s_i) = \alpha(h_i) + (\alpha \cdot f)(g_i) - \alpha(h_i) = \alpha(h_i) + 0 - \alpha(h_i) = 0\]
since $\bar{\alpha} \cdot \bar{f} = 0$, so that $\alpha(s) = \alpha(s_1) - \alpha(s_2) + \cdots - \alpha(s_n) = 0$. Since $q$ is the cokernel of $t \colon S \rightarrowtail H$ in the category $\mathsf{Grp}$ of groups, there is a unique group morphism $\phi \colon H/S \rightarrow A$ such that $\phi \cdot q = \alpha$. Now, seeing that $\bar{q}$ is a regular epimorphism in the category $\mathsf{Mon}$ of monoids and that $a \colon P_A \rightarrow A$ is a monomorphism, the universal property of strong epimorphisms yields a unique arrow $\bar{\phi} \colon q(P_H) \rightarrow P_A$ in $\mathsf{Mon}$ such that $\bar{\phi} \cdot \bar{q} = \bar{\alpha}$ and $a \cdot \bar{\phi} = \phi \cdot \psi$. Accordingly, there exists a unique morphism $(\phi,\bar{\phi}) \colon (H/S,q(P_H)) \rightarrow (A,P_A)$ in $\mathsf{PreOrdGrp}$ such that $(\phi,\bar{\phi}) \cdot (q,\bar{q}) = (\alpha,\bar{\alpha})$. The uniqueness of such an arrow comes from the fact that $(q,\bar{q})$ is an epimorphism in $\mathsf{PreOrdGrp}$.
\end{proof}

It is then easy to prove the following:

\begin{corollary} \label{canonical short z-exact sequence}
For any preordered group $(G,P_G)$, the sequence
\begin{center}
\begin{tikzcd}
N_G \arrow[r,tail] \arrow[d,tail]
& P_G \arrow[r,two heads,"{\bar{\eta}_G}"] \arrow[d,tail]
& \eta_G(P_G) = P_G/N_G \arrow[d,tail]\\
G \arrow[r,equal]
& G \arrow[r,two heads,"{\eta_G}"']
& G/N_G
\end{tikzcd}
\end{center}
is a short $\z$-exact sequence in $\mathsf{PreOrdGrp}$. It is the \emph{canonical short $\z$-exact sequence} associated with $(G,P_G)$ in the pretorsion theory $(\mathsf{Grp(PreOrd)},\mathsf{ParOrdGrp})$.
\end{corollary}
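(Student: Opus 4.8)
The plan is to derive this Corollary directly from the explicit descriptions of $\z$-kernels and $\z$-cokernels obtained in Propositions \ref{Z-kernels} and \ref{Z-cokernels}, so that essentially nothing beyond two elementary identifications remains to be checked. Recall that a short $\z$-exact sequence $(k,q)$ requires $k$ to be a $\z$-kernel of $q$ and $q$ to be a $\z$-cokernel of $k$; I would establish these two facts in turn for the left and right morphisms of the displayed sequence, namely the inclusion $(1_G,\iota)\colon (G,N_G)\to(G,P_G)$ and the canonical quotient $(\eta_G,\bar{\eta}_G)\colon(G,P_G)\to(G/N_G,\eta_G(P_G))$.

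For the $\z$-kernel condition, I would apply Proposition \ref{Z-kernels} to the morphism $(f,\bar{f})=(\eta_G,\bar{\eta}_G)$. That proposition exhibits a $\z$-kernel of $(\eta_G,\bar{\eta}_G)$ as the morphism $(1_G,\bar{k})\colon(G,P_G\cap\mathsf{Ker}(\eta_G))\to(G,P_G)$. Since $\eta_G\colon G\twoheadrightarrow G/N_G$ is the canonical quotient, its kernel in $\mathsf{Grp}$ is exactly $N_G$, and because $N_G\subseteq P_G$ by the very definition of $N_G$, one has $P_G\cap\mathsf{Ker}(\eta_G)=N_G$. This identifies the $\z$-kernel with the left morphism of the sequence. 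The identification $\eta_G(P_G)=P_G/N_G$ recorded in the statement then follows from the fact that $N_G=\mathsf{Ker}(\bar{\eta}_G)$ in $\mathsf{Mon}$, exactly as in the remark closing Proposition \ref{Z-kernels}.

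For the $\z$-cokernel condition, I would apply Proposition \ref{Z-cokernels} to the left morphism $(f,\bar{f})=(1_G,\iota)\colon(G,N_G)\to(G,P_G)$. Here the subgroup $S$ of Proposition \ref{Z-cokernels} is the normal closure of $\iota(N_G)=N_G$ in $G$; but $N_G$ is already a normal subgroup of $G$, so this normal closure is simply $N_G$ itself, whence $S=N_G$. Consequently the quotient map is $\eta_G\colon G\twoheadrightarrow G/N_G$ and the $\z$-cokernel it produces is precisely $(\eta_G,\bar{\eta}_G)\colon(G,P_G)\to(G/N_G,\eta_G(P_G))$, the right morphism of the sequence.

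Since both halves of the short $\z$-exact sequence are obtained as direct instances of the two previous propositions, there is no genuine obstacle here; the only points demanding care are the two bookkeeping identities $P_G\cap\mathsf{Ker}(\eta_G)=N_G$ and $S=N_G$, each of which rests on the already-established facts that $N_G\subseteq P_G$ and that $N_G$ is normal in $G$. Finally, that this sequence coincides with the canonical short $\z$-exact sequence of the pretorsion theory $(\mathsf{Grp(PreOrd)},\mathsf{ParOrdGrp})$ is immediate by comparison with the sequence displayed just after Proposition \ref{Z-pretorsion theory}.
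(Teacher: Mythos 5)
Your proposal is correct and is exactly the argument the paper intends: the corollary is stated as an immediate consequence of Propositions \ref{Z-kernels} and \ref{Z-cokernels}, and you carry out precisely the two required instantiations (the $\z$-kernel of $(\eta_G,\bar{\eta}_G)$ via $P_G\cap\mathsf{Ker}(\eta_G)=N_G$, and the $\z$-cokernel of the inclusion $(G,N_G)\to(G,P_G)$ via $S=N_G$ by normality of $N_G$ in $G$). The two bookkeeping identities you isolate are indeed the only things to check, so your write-up matches the paper's (implicit) proof.
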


\begin{remark}
\emph{As observed in \cite{GM}, the arrow $\bar{\eta}_G \colon P_G \twoheadrightarrow \eta_G(P_G)$ is actually the cokernel of $N_G \rightarrowtail P_G$, that is, $\eta_G(P_G) = P_G/N_G$. 
}
\end{remark}

\subsection{$\z$-kernels and $\z$-cokernels as special pullbacks and pushouts in $\mathsf{PreOrdGrp}$}\hspace*{-0.1em}

We then notice that these $\z$-kernels and $\z$-cokernels can be seen as some particular pullbacks and pushouts, respectively, as is the case for the classical notions of kernels and cokernels. To do this, we first recall some results from the second section of the article \cite{GJM} by Grandis, Janelidze and M\'arki.

In their paper, Grandis, Janelidze and M\'arki consider a full subcategory $\C_0$ of a category $\C_1$, which is simultaneously reflective and coreflective, i.e. a diagram 
\begin{center}
\begin{tikzcd}
\C_1 \arrow[r,bend left=4em,"D"] \arrow[r,bend right=4em,"C"']
& \C_0 \arrow[l,"E"']
\end{tikzcd}
\end{center}
where $C$, $D$ and $E$ are three functors as follows:
\begin{itemize}
\item $E$ is fully faithful;
\item $D$ is a right adjoint left inverse of $E$;
\item $C$ is a left adjoint left inverse of $E$.
\end{itemize}
Denote by $\iota$ the counit of the adjunction $E \dashv D$ and by $\pi$ the unit of the adjunction $C \dashv E$, and assume, for simplicity, that $\C_0$ is a full replete subcategory of $\C_1$ and that $E$ is the inclusion functor.

A category $\C_1$ with the above data can be seen as a category equipped with an \emph{ideal of null morphisms} (which is generated by $\C_0$, in the sense that an arrow is in the ideal if and only if it factors through an object of $\C_0$). A \emph{semiexact category} is then a category equipped with a closed ideal, where any morphism has both a kernel and a cokernel relatively to this ideal.

The link between these ``relative'' kernels and cokernels on the one hand, and the usual limits and colimits on the other hand is as follows:

\begin{proposition} \label{relative kernels as pbs} \cite{GJM}
Let $f \colon A \rightarrow B$ be a morphism in $\C_1$ for which $\iota_B$ is a monomorphism. Then, the following conditions are equivalent:
\begin{enumerate}
\item an arrow $k \colon K \rightarrow A$ is a ``relative'' kernel of $f \colon A \rightarrow B$;
\item there exists a unique morphism $g \colon K \rightarrow D(B)$ making the diagram
\begin{center}
\begin{tikzcd}
K \arrow[r,dotted,"g"] \arrow[d,"k"']
& D(B) \arrow[d,tail,"{\iota_B}"]\\
A \arrow[r,"f"']
& B
\end{tikzcd}
\end{center}
a pullback.
\end{enumerate}
\end{proposition}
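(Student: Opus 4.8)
The plan is to prove the equivalence of (1) and (2) by unwinding the definition of the ``relative'' kernel in terms of the null ideal, and exploiting that, under the hypothesis that $\iota_B$ is a monomorphism, the null morphisms into $B$ are exactly those factoring through $\iota_B$. The key observation is that for any arrow $\alpha \colon X \to A$, the composite $f \cdot \alpha$ factors through an object of $\C_0$ if and only if it factors through $\iota_B \colon D(B) \rightarrowtail B$. Indeed, one direction is clear since $D(B) \in \C_0$; for the converse, given a factorization $f \cdot \alpha = m \cdot u$ with $m \colon C_0 \to B$ and $C_0 \in \C_0$, I would apply the functor $D$ and use the counit $\iota$ to produce a factorization through $\iota_B$, the point being that $\iota$ is natural and that $D(\iota_B)$ is invertible (since $D$ is a left inverse of the inclusion $E$, so $\iota_{C_0}$ is an isomorphism for $C_0 \in \C_0$). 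Because $\iota_B$ is assumed monic, any such factorization of $f \cdot \alpha$ through $\iota_B$ is moreover \emph{unique}.

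First I would establish this reformulation of ``null into $B$'' as a lemma-in-passing, and then translate the universal property of the relative kernel accordingly. The relative kernel $k \colon K \to A$ of $f$ is characterized by: (a) $f \cdot k$ is null, and (b) every $\alpha \colon X \to A$ with $f \cdot \alpha$ null factors uniquely through $k$. Using the reformulation, condition (a) says precisely that there is a (necessarily unique, by monicity of $\iota_B$) arrow $g \colon K \to D(B)$ with $\iota_B \cdot g = f \cdot k$, i.e. that the square in (2) commutes. So the remaining content is that this commuting square is a \emph{pullback} if and only if the universal property (b) holds.

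To prove (1) $\Rightarrow$ (2), I would verify the pullback universal property directly: given a test object $X$ with maps $x \colon X \to A$ and $y \colon X \to D(B)$ satisfying $f \cdot x = \iota_B \cdot y$, the right-hand side $\iota_B \cdot y$ is null (it factors through $D(B) \in \C_0$), hence $f \cdot x$ is null, so by the relative-kernel property there is a unique $u \colon X \to K$ with $k \cdot u = x$; the matching condition $g \cdot u = y$ then follows from the monicity of $\iota_B$, since $\iota_B \cdot g \cdot u = f \cdot k \cdot u = f \cdot x = \iota_B \cdot y$. Conversely, for (2) $\Rightarrow$ (1), given $\alpha \colon X \to A$ with $f \cdot \alpha$ null, the reformulation gives a unique $y \colon X \to D(B)$ with $\iota_B \cdot y = f \cdot \alpha$, so the pair $(\alpha, y)$ is a competitor for the pullback and yields a unique $\phi \colon X \to K$ with $k \cdot \phi = \alpha$; uniqueness of $\phi$ with respect to $k$ alone again follows because $k$ is a pullback of the monomorphism $\iota_B$ and is therefore itself a monomorphism.

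The main obstacle is the careful handling of the reformulation step: showing that a null morphism into $B$ factors through $\iota_B$ \emph{uniquely}, and that an arbitrary factorization through some $C_0 \in \C_0$ can be converted into one through $D(B)$. This is where the coreflective structure (the right adjoint $D$ with counit $\iota$, together with $D E \cong \mathrm{id}$) is genuinely used, and where the hypothesis ``$\iota_B$ is a monomorphism'' does the work of making the factoring arrow $g$ unique and forcing $k$ to be monic. Once this is in place, both implications reduce to routine diagram chases that I would not spell out in full.
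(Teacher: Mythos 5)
Your proposal is correct, but there is nothing in the paper to compare it against: the paper states Proposition \ref{relative kernels as pbs} as a recalled result, citing \cite{GJM}, and gives no proof of it. Your argument supplies the standard one: the key reformulation (a morphism into $B$ is null if and only if it factors through $\iota_B \colon D(B) \rightarrowtail B$, the factorization being unique since $\iota_B$ is monic) is exactly what makes the two universal properties interchangeable, and your two diagram chases — using nullity of $\iota_B \cdot y$ for the pullback verification in $(1) \Rightarrow (2)$, and monicity of $k$ (as a pullback of the mono $\iota_B$) for the uniqueness in $(2) \Rightarrow (1)$ — are complete and sound. One small wording slip in the reformulation step: what you actually need is that $\iota_{C_0}$ is invertible for $C_0 \in \C_0$, so that the naturality square $m \cdot \iota_{C_0} = \iota_B \cdot ED(m)$ yields $m = \iota_B \cdot ED(m) \cdot \iota_{C_0}^{-1}$; the invertibility of $D(\iota_B)$, which your parenthetical conflates with this, is true but not what the argument uses. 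Since you state the correct fact as well, the proof goes through.
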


\begin{proposition} \label{relative cokernels as pos} \cite{GJM}
Let $f \colon A \rightarrow B$ be a morphism in $\C_1$ for which $\pi_A$ is an epimorphism. Then, the following conditions are equivalent:
\begin{enumerate}
\item an arrow $q \colon B \rightarrow Q$ is a ``relative'' cokernel of $f \colon A \rightarrow B$;
\item there exists a unique morphism $g \colon C(A) \rightarrow Q$ making the diagram
\begin{center}
\begin{tikzcd}
A \arrow[r,two heads,"{\pi_A}"] \arrow[d,"f"']
& C(A) \arrow[d,dotted,"g"]\\
B \arrow[r,"q"']
& Q
\end{tikzcd}
\end{center}
a pushout.
\end{enumerate}
\end{proposition}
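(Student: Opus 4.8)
The plan is to argue by duality from Proposition \ref{relative kernels as pbs}, since the coreflection $C \dashv E$ plays, for cokernels, exactly the role that the reflection $E \dashv D$ plays for kernels. The pivot of the whole argument is a single observation about null morphisms out of $A$: a morphism $h \colon A \rightarrow Y$ factors through an object of $\C_0$ if and only if it factors through the unit $\pi_A \colon A \rightarrow C(A)$, and, when $\pi_A$ is an epimorphism, the factorization $h = h' \cdot \pi_A$ is unique. First I would prove this. The ``if'' direction is immediate because $C(A)$ lies in $\C_0$. For the ``only if'' direction, suppose $h = n \cdot m$ with $m \colon A \rightarrow Z$ and $Z \in \C_0$; the universal property of the unit of $C \dashv E$ yields a unique $\bar m \colon C(A) \rightarrow Z$ with $\bar m \cdot \pi_A = m$, whence $h = (n \cdot \bar m) \cdot \pi_A$. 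Uniqueness is exactly the statement that $\pi_A$ is epi.

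Next I would deduce the existence and uniqueness of the comparison arrow $g$. Since $q$ is a relative cokernel of $f$, the composite $q \cdot f$ is null, so by the observation above there is a unique $g \colon C(A) \rightarrow Q$ with $g \cdot \pi_A = q \cdot f$; this is precisely the commutativity of the square in (2), and $g$ is forced by $\pi_A$ being epi. The remaining, and main, task is to show that this commuting square is a pushout exactly when the relative-cokernel universal property holds, i.e. to prove (1) $\Leftrightarrow$ (2).

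The heart of the matter is a bijection between two kinds of test data. On one side, a cocone on the span $C(A) \xleftarrow{\pi_A} A \xrightarrow{f} B$ with vertex $Y$ is a pair $(\beta, \gamma)$ with $\beta \colon B \rightarrow Y$, $\gamma \colon C(A) \rightarrow Y$ and $\beta \cdot f = \gamma \cdot \pi_A$. On the other side, the relative cokernel is tested by a single morphism $\beta \colon B \rightarrow Y$ whose composite $\beta \cdot f$ is null. These two are in bijection: given such a $\beta$, nullity of $\beta \cdot f$ produces, by the observation, a unique $\gamma$ with $\gamma \cdot \pi_A = \beta \cdot f$, giving a cocone; conversely a cocone $(\beta, \gamma)$ has $\beta \cdot f = \gamma \cdot \pi_A$ null since it factors through $C(A) \in \C_0$, and $\gamma$ is recovered from $\beta$ because $\pi_A$ is epi. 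Under this bijection the two factorization conditions agree: a $\psi \colon Q \rightarrow Y$ with $\psi \cdot q = \beta$ automatically satisfies $\psi \cdot g = \gamma$, because $\psi \cdot g \cdot \pi_A = \psi \cdot q \cdot f = \beta \cdot f = \gamma \cdot \pi_A$ and $\pi_A$ is epi. Hence ``unique $\psi$ with $\psi \cdot q = \beta$ for every null-composite $\beta$'' (the relative cokernel property) and ``unique $\psi$ with $\psi \cdot q = \beta$ and $\psi \cdot g = \gamma$ for every cocone'' (the pushout property) are literally the same condition, proving both implications at once.

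I expect the only delicate point to be the bookkeeping around the identification of $\C_0$ with its image under the inclusion $E$ and the precise form of the universal property of $\pi_A$: one must make sure that ``factors through an object of $\C_0$'' and ``factors through $\pi_A$'' are matched up correctly using the unit, and that the epimorphism hypothesis on $\pi_A$ is invoked exactly where uniqueness of $\gamma$ (and hence of $g$) is needed. Everything else is the formal dual of the kernel case already recorded in Proposition \ref{relative kernels as pbs}.
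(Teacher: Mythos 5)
Your proof is correct, but note that the paper itself offers no proof of this proposition to compare against: it is recalled from \cite{GJM} as background material (as is Proposition \ref{relative kernels as pbs}), so your argument has to be judged on its own merits. Judged that way, it is sound and self-contained. The pivot you identify --- that a morphism $h \colon A \rightarrow Y$ is null if and only if it factors through the unit $\pi_A \colon A \rightarrow C(A)$, the factorization being unique when $\pi_A$ is an epimorphism --- is exactly the right one: it is an immediate consequence of the universal property of the unit of $C \dashv E$ together with fullness of $\C_0$, and it sets up your bijection between cocones on the span $C(A) \leftarrow A \rightarrow B$ and morphisms $\beta \colon B \rightarrow Y$ with $\beta \cdot f$ null. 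The observation that any $\psi$ with $\psi \cdot q = \beta$ automatically satisfies $\psi \cdot g = \gamma$ (via $\psi \cdot g \cdot \pi_A = \psi \cdot q \cdot f = \beta \cdot f = \gamma \cdot \pi_A$ and $\pi_A$ epi) is what makes the two universal properties coincide on the nose, and it correctly handles both implications as well as the uniqueness of $g$. The single point you leave implicit is that, in the direction $(2) \Rightarrow (1)$, the definition of a relative cokernel also requires $q \cdot f$ to be null; this follows at once because the commutativity of the pushout square exhibits $q \cdot f = g \cdot \pi_A$ as factoring through $C(A) \in \C_0$ --- precisely your own remark about cocones, applied to the cocone $(q,g)$ itself --- so the omission is cosmetic rather than a gap.
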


Let $\z$ denote the full subcategory of $\mathsf{PreOrdGrp}$ whose objects are all $\z$-trivial objects. In this case, a kernel (respectively a cokernel) with respect to the ideal generated by $\z$ is nothing but a $\z$-kernel (respectively a $\z$-cokernel). So, let us take $\C_0 = \z$ and $\C_1 = \mathsf{PreOrdGrp}$ in the theory developed by Grandis, Janelidze and M\'arki. The functor $E$ is then the inclusion functor and we define the two functors $C$ and $D$ in the following way: for any preordered group $(G,P_G)$,
\begin{center}
\begin{tikzcd}
\mathsf{PreOrdGrp} \arrow[r,bend left=4em,"D"] \arrow[r,bend right=4em,"C"']
& \z \arrow[l,hook',"E"']
\end{tikzcd}
\end{center}
\begin{itemize}
\item $D(G,P_G) = (G,0)$;
\item $C(G,P_G) = (G/M_G,0)$
where $M_G = \{x_1 - x_2 + \cdots + x_{n-1} - x_n \mid x_i \in P_G \ \forall 1 \le i \le n\}$ is the normal closure of the submonoid $P_G$ in $G$.
\end{itemize}

Let us show that we then have the following chain of adjunctions: $C \dashv E \dashv D$.

\begin{proposition}
The functor $D \colon \mathsf{PreOrdGrp} \rightarrow \z$ is a right adjoint left inverse of the inclusion functor $E \colon \z \hookrightarrow \mathsf{PreOrdGrp}$.
\end{proposition}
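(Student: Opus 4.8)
The plan is to prove the two assertions separately: first that $D$ is a genuine functor that is a strict left inverse of $E$, and then that $E \dashv D$ by exhibiting the counit and verifying its universal property.

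First I would check that $D$ is well-defined on morphisms. Given $(f,\bar f) \colon (G,P_G) \to (H,P_H)$, I set $D(f,\bar f) = (f,0)$, the underlying group morphism $f$ equipped with the only possible map $0 \to 0$ between the trivial positive cones; this is a legitimate arrow of $\z$ because $f(0) = 0 \subseteq 0$, and functoriality is immediate since composition and identities are inherited from $\mathsf{Grp}$. The left-inverse property $D \circ E = \mathrm{Id}_{\z}$ is then transparent: an object $(A,0)$ of $\z$ already has trivial positive cone, so $D(E(A,0)) = D(A,0) = (A,0)$, and likewise on arrows.

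Next I would construct the counit. For each preordered group $(G,P_G)$, I take $\iota_{(G,P_G)} \colon E(D(G,P_G)) = (G,0) \to (G,P_G)$ to be the canonical arrow $(1_G, j)$, where $j \colon 0 \rightarrowtail P_G$ is the inclusion of the neutral element; this is a valid morphism of $\mathsf{PreOrdGrp}$ since $1_G(0) = 0 \subseteq P_G$, and it is natural in $(G,P_G)$ because $1_G$ is. The crucial observation, which makes everything routine, is that whenever the source has trivial positive cone the compatibility condition on positive cones is automatically satisfied: a morphism $(A,0) \to (G,P_G)$ in $\mathsf{PreOrdGrp}$ is the same datum as a group morphism $f \colon A \to G$, since the requirement $f(0) \subseteq P_G$ holds for free. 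I would then verify the universal property: given any $(A,0) \in \z$ and any $(g,\bar g) \colon (A,0) \to (G,P_G)$ in $\mathsf{PreOrdGrp}$, the group morphism $g$ also defines an arrow $(g,0) \colon (A,0) \to (G,0) = D(G,P_G)$ of $\z$, and by construction $\iota_{(G,P_G)} \cdot E(g,0) = (g,\bar g)$; uniqueness holds because any factorization must have underlying group morphism equal to $g$, the restriction to positive cones being forced to be the unique map $0 \to 0$. Equivalently, this is the natural bijection $\mathsf{Hom}_{\mathsf{PreOrdGrp}}((A,0),(G,P_G)) \cong \mathsf{Hom}_{\z}((A,0),(G,0))$, both sides being canonically identified with $\mathsf{Hom}_{\mathsf{Grp}}(A,G)$.

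I do not expect any serious obstacle here: the entire argument rests on the single elementary fact that objects of $\z$ have positive cone $0$, so that the positive-cone compatibility conditions defining morphisms out of them are vacuous. The only points demanding (minimal) care are confirming that $(G,0)$ indeed lies in $\z$ and that the counit $\iota_{(G,P_G)}$ is a well-defined monomorphism, the latter being exactly the hypothesis needed later to invoke Proposition \ref{relative kernels as pbs}.
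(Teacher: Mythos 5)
Your proof is correct and follows essentially the same route as the paper: verify $D \cdot E = 1_{\z}$ directly, then exhibit the counit $(1_G,0) \colon (G,0) \to (G,P_G)$ and check its universal property, which is vacuous on positive cones since morphisms out of objects of $\z$ are determined by their underlying group morphisms. The extra details you include (functoriality of $D$ on arrows, naturality of the counit, the hom-set bijection with $\mathsf{Hom}_{\mathsf{Grp}}(A,G)$) are points the paper leaves implicit, but they do not change the argument.
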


\begin{proof}
It is first of all clear that $D$ is a left inverse of $E$. Indeed, for any $\z$-trivial object $(G,0)$, $(D \cdot E)(G,0) = D(G,0) = (G,0)$. 

We are now going to prove that, for any preordered group $(G,P_G)$, the arrow $(\iota_G,\bar{\iota}_G) = (1_G,0) \colon ED(G,P_G) = (G,0) \rightarrow (G,P_G)$ is the $(G,P_G)$-component of the counit $\iota$ of the adjunction $E \dashv D$.
\begin{center}
\begin{tikzcd}
0 \arrow[rr,"{\bar{\iota}_G = 0}"] \arrow[dd,tail]
& & P_G \arrow[dd,tail]\\
 & 0 \arrow[ul,dotted,"\bar{\psi} = 0" description] \arrow[dd,tail] \arrow[ur,"{\bar{\phi}} = 0"']
& \\
G \arrow[rr,near end,"{\iota_G = 1_G}"]
& & G \\
 & A \arrow[ul,dotted,"{\psi = \phi}" description] \arrow[ur,"{\phi}"']
& 
\end{tikzcd}
\end{center}
Let $(A,0) \in \z$ and consider an arrow $(\phi,\bar{\phi}) \colon (A,0) \rightarrow (G,P_G)$. It suffices to take $\psi = \phi$ (since $\phi$ is the unique morphism such that $1_G \cdot \phi = \phi$) and $\bar{\psi} = 0$ (since of course $\bar{\iota}_G \cdot 0 = 0 = \bar{\phi}$). The pair $(\psi,\bar{\psi})$ is then clearly a morphism of preordered groups, unique with the property $(\iota,\bar{\iota}) \cdot (\psi,\bar{\psi}) = (\phi,\bar{\phi})$. We conclude that $D$ is a right adjoint for $E$.
\end{proof}

\begin{proposition}
The functor $C \colon \mathsf{PreOrdGrp} \rightarrow \z$ is a left adjoint left inverse of the inclusion functor $E \colon \z \hookrightarrow \mathsf{PreOrdGrp}$.
\end{proposition}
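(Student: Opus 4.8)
The plan is to verify the two defining properties in turn. That $C$ is a left inverse of $E$ is immediate: for a $\z$-trivial object $(G,0)$ one has $P_G = 0$, so the normal closure $M_G$ of $P_G$ in $G$ is trivial and $(C \cdot E)(G,0) = (G/M_G,0) = (G,0)$.

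For the adjunction $C \dashv E$, I would propose that the $(G,P_G)$-component of the unit $\pi$ is the pair $(\pi_G, \bar\pi_G) = (\pi_G, 0) \colon (G,P_G) \to EC(G,P_G) = (G/M_G, 0)$, where $\pi_G \colon G \twoheadrightarrow G/M_G$ is the canonical quotient in $\mathsf{Grp}$. This is a legitimate arrow of $\mathsf{PreOrdGrp}$, since $P_G \subseteq M_G = \mathsf{Ker}(\pi_G)$ forces $\pi_G(P_G) = 0$, and $\bar\pi_G \colon P_G \to 0$ can only be the zero map.

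To check the universal property, I would start from an arbitrary $\z$-trivial object $(A,0)$ and a morphism $(\phi,\bar\phi) \colon (G,P_G) \to E(A,0) = (A,0)$, and look for a unique $(\psi,\bar\psi) \colon (G/M_G,0) \to (A,0)$ in $\z$ with $(\psi,\bar\psi) \cdot (\pi_G,\bar\pi_G) = (\phi,\bar\phi)$. Since the codomain lies in $\z$, we have $\bar\phi = 0$, hence $\phi(P_G) = 0$. The key step is to propagate this to the whole of $M_G$: every element of $M_G$ has the form $x_1 - x_2 + \cdots - x_n$ with $x_i \in P_G$, so applying the group homomorphism $\phi$ yields $\phi(x_1) - \phi(x_2) + \cdots - \phi(x_n) = 0$, whence $M_G \subseteq \mathsf{Ker}(\phi)$. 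The universal property of the quotient $G/M_G$ in $\mathsf{Grp}$ then provides a unique group morphism $\psi \colon G/M_G \to A$ with $\psi \cdot \pi_G = \phi$; taking $\bar\psi = 0$ produces the desired arrow of $\z$, whose uniqueness follows from that of $\psi$ together with the fact that $\bar\psi \colon 0 \to 0$ is forced.

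I do not expect a serious obstacle. The only delicate point, and precisely the reason the construction quotients by $M_G$ rather than by $P_G$ itself, is this passage from $\phi(P_G) = 0$ to $\phi(M_G) = 0$: one must observe that annihilating the positive cone automatically annihilates its normal closure, which is guaranteed by the explicit description of $M_G$ as the set of alternating sums of elements of $P_G$.
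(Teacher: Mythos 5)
Your proof is correct and follows essentially the same route as the paper's: the same left-inverse computation, the same unit $(\pi_G,0)\colon (G,P_G)\to (G/M_G,0)$, the same observation that $\bar\phi=0$ forces $\phi$ to vanish on all alternating sums constituting $M_G$, and the same appeal to the universal property of the quotient in $\mathsf{Grp}$. No gaps; the remark on why one must quotient by $M_G$ rather than $P_G$ is exactly the point the paper's computation makes implicitly.
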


\begin{proof}
We first check that $C$ is a left inverse of $E$. For any $(G,0) \in \z$, $M_G = 0$, so that $(C \cdot E)(G,0) = C(G,0) = (G/0,0) = (G,0)$, as desired.

Let us now show that, for any preordered group $(G,P_G)$, the $(G,P_G)$-component of the unit $\pi$ of the adjunction $C \dashv E$ is given by the morphism $(\pi_G,\bar{\pi}_G) \colon (G,P_G) \rightarrow (G/M_G,0)$, where $\pi_G \colon G \twoheadrightarrow G/M_G$ is the canonical quotient.
\begin{center}
\begin{tikzcd}
 & P_G \arrow[rr,"{\bar{\pi}_G = 0}"] \arrow[dr,"{\bar{\phi} = 0}"'] \arrow[dd,tail] \arrow[ddl,tail]
& & 0 \arrow[dl,dotted,"{\bar{\psi} = 0}" description] \arrow[dd,tail]\\
 & & 0 \arrow[dd,tail] & \\
M_G \arrow[r,tail,"i"']
& G \arrow[rr,two heads,near end,"{\pi_G}"'] \arrow[dr,"{\phi}"']
& & G/M_G \arrow[dl,dotted,"{\psi}"]\\
 & & A &
\end{tikzcd}
\end{center}
Consider a morphism $(\phi,\bar{\phi}) \colon (G,P_G) \rightarrow (A,0)$ (where $(A,0)$ is then a $\z$-trivial object). For any $m \in M_G$, we can compute that $\phi(m) = 0$. Indeed, $m = x_1 - x_2 + \cdots + x_{n-1} - x_n$ for some $x_i \in P_G$ ($i \in \{1,\cdots,n\}$), which implies that
\begin{align*}
\phi(m) & = \phi(x_1) - \phi(x_2) + \cdots + \phi(x_{n-1}) - \phi(x_n)\\
& = \bar{\phi}(x_1) - \bar{\phi}(x_2) + \cdots + \bar{\phi}(x_{n-1}) - \bar{\phi}(x_n)\\
& = 0
\end{align*}
since $\bar{\phi} = 0$. This means that $\phi \cdot i = 0$ where $i \colon M_G \rightarrowtail G$ is the inclusion of $M_G$ in $G$. By the universal property of cokernels, there exists a unique morphism $\psi \colon G/M_G \rightarrow A$ such that $\psi \cdot \pi_G = \phi$. By taking $\bar{\psi} = 0$, we then get a morphism of preordered groups $(\psi,\bar{\psi}) \colon (G/M_G,0) \rightarrow (A,0)$, unique with the property $(\psi,\bar{\psi}) \cdot (\pi_G,\bar{\pi}_G) = (\phi,\bar{\phi})$. This proves that $C$ is a left adjoint for $E$.
\end{proof}

According to Propositions \ref{relative kernels as pbs} and \ref{relative cokernels as pos}, our $\z$-kernels and $\z$-cokernels are then actually some special pullbacks and pushouts.

\section{Properties of the stable category $\MonPos$ and of the functor $P$}

Let us start with two obvious observations:

\begin{proposition}
The category $\MonPos$ is pointed with zero object the trivial monoid $0$.
\end{proposition}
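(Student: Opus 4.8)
The plan is to prove that $\MonPos$ is pointed with the trivial monoid $0$ as its zero object. Since $\MonPos$ is a full subcategory of $\mathsf{Mon}$, and the trivial monoid $0 = \{0\}$ is both initial and terminal in $\mathsf{Mon}$, the essential content is merely to verify that $0$ actually lies in $\MonPos$; the universal properties are then inherited from $\mathsf{Mon}$ by fullness.

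First I would check that $0 \in \MonPos$ by confirming the two defining conditions of the subcategory. The trivial monoid is embeddable in a group (namely in itself, or in any group via the neutral element), so the first condition holds. For the second condition, given $a, b \in 0$ we necessarily have $a = b = 0$, and taking $x = y = 0$ yields $a + b = 0 = b + x = y + a$. Hence $0$ satisfies both requirements and belongs to $\MonPos$.

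Next I would establish that $0$ is a zero object. For any monoid $M$, there is exactly one monoid morphism $0 \to M$ (sending $0 \mapsto 0$) and exactly one morphism $M \to 0$ (the constant map), these being the standard initial and terminal morphisms in $\mathsf{Mon}$. Since $\MonPos$ is a \emph{full} subcategory, for any object $M \in \MonPos$ these unique morphisms are themselves morphisms in $\MonPos$, and no others are added. Therefore $0$ is both initial and terminal in $\MonPos$, i.e. a zero object, making the category pointed. The zero morphism between any two objects $M, N \in \MonPos$ is then the composite $M \to 0 \to N$, which sends every element to $0$.

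There is no real obstacle here; the only point requiring a moment's care is verifying membership $0 \in \MonPos$, since the universal property alone does not guarantee the subcategory is closed under the relevant object. Once that membership is confirmed, fullness of the inclusion $\MonPos \hookrightarrow \mathsf{Mon}$ does all the remaining work automatically.
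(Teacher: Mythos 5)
Your proof is correct: the paper states this proposition without proof as an ``obvious observation,'' and your argument --- checking that the trivial monoid satisfies the two defining conditions of $\MonPos$ and then invoking fullness of the inclusion into $\mathsf{Mon}$ to inherit the initial and terminal universal properties --- is precisely the verification the paper leaves implicit.
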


\begin{proposition} \hspace*{1em}
\begin{enumerate}
\item For any $\z$-trivial object $(G,P_G)$ in $\mathsf{PreOrdGrp}$, $P(G,P_G) = 0$ in $\MonPos$.
\item For any $\z$-trivial morphism $(f, \bar{f})$ in $\mathsf{PreOrdGrp}$, $P(f,\bar{f})$ is the zero morphism in $\MonPos$.
\end{enumerate}
\end{proposition}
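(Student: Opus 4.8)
The plan is to unwind the definitions, since both claims follow almost immediately from the description of the positive cone functor $P$ together with the earlier characterizations of $\z$-trivial objects and $\z$-trivial morphisms.

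For part (1), I would recall that a $\z$-trivial object is, by Proposition \ref{Z-pretorsion theory}, precisely a preordered group of the form $(G,0)$, i.e.\ one whose positive cone is the trivial monoid $0$. Since the positive cone functor is defined on objects by $P(G,P_G) = P_G$, I simply compute $P(G,0) = 0$. As the trivial monoid $0$ is the zero object of $\MonPos$ by the preceding Proposition, this is exactly the assertion that $P$ sends $\z$-trivial objects to the zero object.

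For part (2), I would invoke Proposition \ref{trivial morphisms}, which states that a morphism $(f,\bar{f})$ in $\mathsf{PreOrdGrp}$ is $\z$-trivial if and only if $\bar{f} = 0$. Since $P$ is defined on morphisms by $P(f,\bar{f}) = \bar{f}$, a $\z$-trivial morphism is sent to $P(f,\bar{f}) = \bar{f} = 0$, which is the zero morphism in $\MonPos$ (recalling that the zero morphism between monoids is the one factoring through the trivial monoid). This completes the argument.

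There is no real obstacle here: both statements are, as the authors themselves note, ``obvious observations,'' and the entire content lies in having set up the functor $P$ and the characterizations of $\z$-triviality correctly beforehand. The only point requiring the slightest care is making sure one cites the right characterization for each part --- the object-level description of $\z$ from Proposition \ref{Z-pretorsion theory} for (1), and the morphism-level description from Proposition \ref{trivial morphisms} for (2) --- rather than conflating them.
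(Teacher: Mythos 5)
Your proof is correct and matches the paper's (implicit) reasoning exactly: the paper states this result without proof as an ``obvious observation,'' and the intended argument is precisely your unwinding of the definitions --- $P(G,0)=0$ for part (1), and Proposition \ref{trivial morphisms} combined with $P(f,\bar{f})=\bar{f}$ for part (2). Nothing is missing.
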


It is now useful to note that there is a torsion theory in the category $\MonPos$. The torsion-free subcategory is the full subcategory $\mathsf{Red}\MonPos$ of $\MonPos$ whose objects are in addition reduced monoids, while the torsion subcategory is the one generated by the objects in $\MonPos$ which are also groups. Notice that this latter subcategory then clearly corresponds to the category $\mathsf{Grp}$ of groups.

\begin{proposition} \label{TT in MonPos}
The pair $(\mathsf{Grp},\mathsf{Red}\MonPos)$ of full replete subcategories of $\MonPos$ is a torsion theory in $\MonPos$.
\end{proposition}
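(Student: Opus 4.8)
The plan is to read a torsion theory in the pointed category $\MonPos$ as a $\z$-pretorsion theory in which the class $\z$ is reduced to the zero object $0$; in that situation $\z$-kernels, $\z$-cokernels and short $\z$-exact sequences are exactly the ordinary (pointed) kernels, cokernels and short exact sequences. So I must verify the two conditions (PT1) and (PT2) for the pair $(\mathsf{Grp},\mathsf{Red}\MonPos)$, both full and replete by construction.

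For (PT1) I would take a group $T$ (viewed as an object of $\MonPos$), a reduced monoid $F\in\mathsf{Red}\MonPos$, and an arbitrary morphism $f\colon T\to F$, i.e.\ a monoid homomorphism. For each $t\in T$ the element $-t$ again lies in $T$, and $f(t)+f(-t)=f(0)=0=f(-t)+f(t)$, so $f(t)$ is invertible in $F$. Since $F$ is reduced, its only invertible element is $0$, whence $f(t)=0$ for every $t$ and $f$ factors through $0$. This is the short, routine half.

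For (PT2), given $M\in\MonPos$, I would invoke Lemma \ref{lemma2 Mon} to realise $M$ as the positive cone of its group completion, so that $(G,M)\in\mathsf{PreOrdGrp}$ with $G=grp(M)$ and $M=P_G$. I then set $N=N_G=\{m\in M\mid -m\in M\}$, the group of units of $M$, which is a normal subgroup of $G$ and is itself a group, hence an object of $\mathsf{Grp}\subseteq\MonPos$; and I set $F=\eta_G(M)=M/N$, the torsion-free part of the canonical short $\z$-exact sequence associated with $(G,M)$. By Proposition \ref{Z-pretorsion theory} one has $(G/N,F)\in\mathsf{ParOrdGrp}$, so $F$ is a reduced monoid in $\MonPos$, i.e.\ $F\in\mathsf{Red}\MonPos$. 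My candidate sequence is the inclusion $k\colon N\hookrightarrow M$ followed by the corestriction $q\colon M\twoheadrightarrow F$ of $\eta_G$.

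It then remains to check that $N\xrightarrow{k}M\xrightarrow{q}F$ is short exact in $\MonPos$. The composite $q\cdot k$ is $0$ because $N=\ker\eta_G$. To see that $k=\ker q$, any $\alpha\colon X\to M$ with $q\cdot\alpha=0$ has image inside $\ker\eta_G\cap M=N$, yielding the factorisation through $k$, unique since $k$ is a monomorphism. To see that $q=\mathrm{coker}\,k$, I would take $\beta\colon M\to Y$ with $\beta\cdot k=0$, i.e.\ with $\beta$ vanishing on $N$, and show it is constant on cosets of $N$: whenever $\eta_G(m)=\eta_G(m')$ one has $m'=m+n$ with $n\in N\subseteq M$, so $\beta(m')=\beta(m)+\beta(n)=\beta(m)$, which produces the induced $\psi\colon F\to Y$, unique because $q$ is a surjective (hence epic) morphism. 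The main obstacle I anticipate is exactly this cokernel step: I must keep the coset computations inside the monoid $M$ rather than merely inside the ambient group $G$, so that the monoid homomorphism $\beta$ may legitimately be applied and the difference of two representatives is a genuine element $n\in N\subseteq M$ on which $\beta$ vanishes. Everything else reduces to the descriptions of $N_G$, of $\eta_G(P_G)=P_G/N_G$, and of the reducedness of the torsion-free part already recorded for the pretorsion theory on $\mathsf{PreOrdGrp}$.
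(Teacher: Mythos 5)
Your proposal is correct, and the first half coincides with the paper's argument: the proof that any morphism from a group to a reduced monoid in $\MonPos$ is zero is word-for-word the same observation (images of invertible elements are invertible, and a reduced monoid has only $0$ as invertible element). Where you genuinely diverge is in condition (PT2). The paper works entirely inside $\MonPos$: it defines $U(M)=\{x\in M\mid -x\in M\}$, introduces the relation $a\sim b \Leftrightarrow a=b+u$ for some $u\in U(M)$, proves by hand that $\sim$ is a congruence --- this is exactly where the defining axiom of $\MonPos$ (existence of $x,y$ with $a+b=b+x=y+a$) gets used, to show $-d+u+d\in U(M)$ --- and then verifies directly that $M/U(M)$ is reduced. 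You instead transfer the problem to $\mathsf{PreOrdGrp}$: via Lemma \ref{lemma2 Mon} you realise $M$ as the positive cone of $grp(M)$, take the canonical short $\z$-exact sequence of Corollary \ref{canonical short z-exact sequence} (whose torsion-free part is reduced because it lies in $\mathsf{ParOrdGrp}$, by Proposition \ref{Z-pretorsion theory}), and then verify the kernel and cokernel universal properties of $N\rightarrow M\rightarrow M/N$ in $\MonPos$ explicitly. Each route buys something: yours gets the reducedness of the quotient, and its membership in $\MonPos$, for free from the pretorsion theory (the paper leaves the latter point implicit), and it makes the conceptual link between the two theories transparent; the price is the explicit exactness check, whose delicate point you correctly identify and handle --- since $N_G\subseteq M$, the difference $-m+m'$ of two representatives of the same coset is an element of $M$ on which a monoid homomorphism vanishing on $N$ can be evaluated. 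The paper's route is self-contained in $\MonPos$, needs no embedding into a group completion, and exhibits concretely where the axiom defining $\MonPos$ is indispensable, but it asserts rather than verifies the exactness of the resulting sequence. Both proofs are complete modulo routine verifications, and they construct the same sequence up to isomorphism, since the congruence $\sim$ is precisely the kernel pair relation of $\eta_{grp(M)}$ restricted to $M$.
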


\begin{proof}
Let us first prove that any morphism $f \colon A \rightarrow B$ in $\MonPos$, where $A \in \mathsf{Grp}$ and $B \in \mathsf{Red}\MonPos$, is the zero morphism. For any $x \in A$, we have that $-x \in A$ (since $A$ is a group), so that both $f(x)$ and $f(-x) = -f(x)$ are in $B$. This implies that $f(x) = 0$ since $B$ is reduced. 

Let now $M \in \MonPos$, and consider the subgroup 
\[U(M) = \{x \in M \mid -x \in M\} \in \mathsf{Grp}\]
of invertible elements of $M$. Define also, for any $a, b \in M$, the following equivalence relation: 
\begin{align*}
a \sim b & \quad \Leftrightarrow \quad \exists u \in U(M) \ \text{such that} \ a = b + u\\
& \quad \Leftrightarrow \quad \exists v \in U(M) \ \text{such that} \ b = a + v.
\end{align*}
It is actually a congruence. Indeed, let $a, b, c, d \in M$ be such that $a \sim b$ and $c \sim d$. Then, there exists $u \in U(M)$ such that $a = b + u$ and there exists $u' \in U(M)$ such that $c = d + u'$. Accordingly,
\[a + c = b + u + d + u' = (b + d) + (-d + u + d + u').\]
Let us prove that $-d + u +d \in U(M)$. Since $M \in \MonPos$, there exist $x, y \in M$ such that $u + d = d + x = y + u$. In particular, $-d + u + d = x \in M$. On the other hand, there exist $z, w \in M$ such that $-u + d = d + z = w - u$, so that $-d - u + d = z \in M$, i.e. $-(-d + u + d) \in M$. This shows that $-d + u + d \in U(M)$, and then that $-d + u + d + u' \in U(M)$, as desired. As a consequence, $a + c \sim b + d$ and $\sim$ is a congruence on $M$. Let us then consider the monoid $M/U(M) := M/\sim$ and let us check that it is reduced. Let $y \in M/U(M)$ such that also $-y \in M/U(M)$. Then, there exist $x$ and $z$ in $M$ such that $\bar{x} = y$ and $\bar{z} = -y$ (where $\bar{x}$ denotes the equivalence class of $x$ with respect to $\sim$). We compute that $\overline{x + z} = \bar{x} + \bar{z} = y - y = 0$, which implies that $x + z \in U(M)$. In particular, $-(x + z) \in M$. From this, we deduce that 
\[-x = z - z - x = z - (x + z) \in M,\]
i.e. that $x \in U(M)$. Accordingly, $y = \bar{x} = 0$, hence $M/U(M) \in \mathsf{Red}\MonPos$. As a conclusion, we have built a short exact sequence 
\begin{center} 
\begin{tikzcd}
U(M) \arrow[r,"{\kappa_M}"]
& M \arrow[r,two heads,"{\eta_M}"]
& M/U(M)
\end{tikzcd} 
\end{center}
in $\MonPos$ with $U(M) \in \mathsf{Grp}$ and $M/U(M) \in \mathsf{Red}\MonPos$.
\end{proof}

\begin{proposition}
The functor $P \colon \mathsf{PreOrdGrp} \rightarrow \MonPos$ is a torsion theory functor.
\end{proposition}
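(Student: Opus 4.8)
The plan is to check the two conditions in the definition of a torsion theory functor directly, with $\s = \MonPos$, the torsion theory on $\MonPos$ being the pair $(\mathsf{Grp},\mathsf{Red}\MonPos)$ of Proposition \ref{TT in MonPos}, and $\Phi = P$. Condition (1) is immediate from the descriptions of the torsion and torsion-free subcategories recalled in the introduction: if $(G,P_G) \in \mathsf{Grp(PreOrd)}$, then $P_G$ is a group, whence $P(G,P_G) = P_G \in \mathsf{Grp}$; and if $(G,P_G) \in \mathsf{ParOrdGrp}$, then $P_G$ is a reduced monoid, whence $P(G,P_G) = P_G \in \mathsf{Red}\MonPos$.

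The heart of the matter is condition (2), and the key observation I would start from is that the normal subgroup
\[N_G = \{x \in G \mid x \in P_G \ \text{and} \ -x \in P_G\}\]
appearing in the canonical short $\z$-exact sequence of Corollary \ref{canonical short z-exact sequence} is exactly the group $U(P_G) = \{x \in P_G \mid -x \in P_G\}$ of invertible elements of the monoid $P_G$ that is used to build the torsion theory in the proof of Proposition \ref{TT in MonPos}. Since the functor $P$ forgets the group components of a morphism and retains only its positive-cone component, applying $P$ to the canonical short $\z$-exact sequence produces precisely its top row
\begin{center}
\begin{tikzcd}
U(P_G) \arrow[r,"{\kappa_{P_G}}"]
& P_G \arrow[r,two heads,"{\eta_{P_G}}"]
& P_G/U(P_G),
\end{tikzcd}
\end{center}
which is nothing but the short exact sequence associated with $P_G$ in the torsion theory $(\mathsf{Grp},\mathsf{Red}\MonPos)$ constructed in Proposition \ref{TT in MonPos}. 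This is a short exact sequence in $\MonPos$, as required.

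The one place where I would slow down is the identification $\bar{\eta}_G = \eta_{P_G}$ of the right-hand map, i.e. the compatibility of the two quotient constructions on $P_G$: I would verify that for $a, b \in P_G$ one has $\eta_G(a) = \eta_G(b)$ if and only if $-b + a \in N_G = U(P_G)$ if and only if $a \sim b$, so that the image $\eta_G(P_G) = P_G/N_G$ of $P_G$ under the group quotient coincides, as a monoid, with the quotient $P_G/U(P_G) = P_G/{\sim}$ of Proposition \ref{TT in MonPos}. No serious obstacle arises here --- the whole argument is essentially bookkeeping once $N_G = U(P_G)$ is noticed --- but this comparison is the step that requires a little care, since it is where the normality of $N_G$ in the possibly non-abelian group $G$ enters.
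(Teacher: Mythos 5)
Your proof is correct and takes essentially the same approach as the paper: condition (1) is checked directly, and condition (2) is obtained by applying $P$ to the canonical short $\z$-exact sequence and recognizing, via the identification $N_G = U(P_G)$, its top row as the canonical short exact sequence of the torsion theory $(\mathsf{Grp},\mathsf{Red}\MonPos)$ in $\MonPos$ from Proposition \ref{TT in MonPos}. Your extra verification that $\eta_G(P_G) = P_G/N_G$ coincides with the quotient $P_G/U(P_G)$ by the congruence $\sim$ only makes explicit what the paper records in the remark following Corollary \ref{canonical short z-exact sequence}.
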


\begin{proof}
It is first of all clear that $P(G,P_G) \in \mathsf{Grp}$ (respectively $\in \mathsf{Red}\MonPos$) for any $(G,P_G) \in \mathsf{Grp(PreOrd)}$ (respectively $\in \mathsf{ParOrdGrp}$). We next easily observe that, for any preordered group $(G,P_G)$, if we apply the functor $P$ to the canonical short $\z$-exact sequence
\begin{center}
\begin{tikzcd}
N_G \arrow[r,tail] \arrow[d,tail]
& P_G \arrow[r,two heads,"{\bar{\eta}_G}"] \arrow[d,tail]
& P_G/N_G \arrow[d,tail]\\
G \arrow[r,equal]
& G \arrow[r,two heads,"{\eta_G}"']
& G/N_G
\end{tikzcd}
\end{center}
associated with $(G,P_G)$ in the pretorsion theory $(\mathsf{Grp(PreOrd)},\mathsf{ParOrdGrp})$ (see Corollary \ref{canonical short z-exact sequence}), we then get 
\begin{center}
\begin{tikzcd}
N_G = U(P_G) \arrow[r,tail]
& P_G \arrow[r,two heads]
& P_G/N_G
\end{tikzcd}
\end{center}
which is a short exact sequence in $\MonPos$.
\end{proof}

Moreover, it is easy to see that the positive cone functor $P \colon \mathsf{PreOrdGrp} \rightarrow \MonPos$ preserves a specific form of short exact sequences.

\begin{proposition}
The functor $P \colon \mathsf{PreOrdGrp} \rightarrow \MonPos$ preserves short exact sequences of the following form:
\begin{equation} \label{special SES}
\begin{tikzcd}
P_G \arrow[r,equal] \arrow[d,tail]
& P_G \arrow[r,two heads] \arrow[d,tail]
& 0 \arrow[d,tail]\\
H \arrow[r,tail]
& G \arrow[r,two heads]
& G/H
\end{tikzcd}
\end{equation}
for any $(G,P_G) \in \mathsf{PreOrdGrp}$ such that $H$ is a normal subgroup of $G$ containing the positive cone $P_G$ of $G$.
\end{proposition}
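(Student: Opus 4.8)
The plan is to apply the functor $P$ to the displayed sequence, compute the resulting diagram in $\MonPos$ explicitly, and then check directly that it is a kernel--cokernel pair in the pointed category $\MonPos$. Everything will hinge on the single hypothesis $P_G\subseteq H$.

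First I would pin down the two arrows of the displayed sequence together with their positive-cone components. The left-hand arrow is the inclusion $(k,\bar k)\colon (H,P_G)\rightarrowtail (G,P_G)$: since $H$ is a subgroup of $G$ containing $P_G$, the pair $(H,P_G)$ really is an object of $\mathsf{PreOrdGrp}$ (the cone $P_G$ is still closed under conjugation in $H$ because it is already closed under conjugation in the larger group $G$), and its component $\bar k\colon P_G\to P_G$ is the identity $1_{P_G}$. The right-hand arrow is $(q,\bar q)\colon (G,P_G)\twoheadrightarrow (G/H,0)$ with $q$ the canonical quotient; since $P_G\subseteq H=\mathsf{Ker}(q)$ we get $q(P_G)=0$, so the codomain cone is $0$ and $\bar q\colon P_G\to 0$ is the zero morphism. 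These same two facts, $H\cap P_G=P_G$ and $q(P_G)=0$, are precisely what make the displayed sequence a short exact sequence in $\mathsf{PreOrdGrp}$, via the kernel and cokernel descriptions recalled in the Introduction.

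Applying $P$ (which sends $(G,P_G)$ to $P_G$ and a morphism $(f,\bar f)$ to $\bar f$) then turns the sequence into $P_G\xrightarrow{1_{P_G}}P_G\xrightarrow{0}0$ in $\MonPos$. It remains to observe that this is a short exact sequence, which is purely formal: in any pointed category the identity $1_{P_G}$ is the kernel of the unique morphism $P_G\to 0$ into the zero object, and the zero morphism $P_G\to 0$ is the cokernel of the isomorphism $1_{P_G}$. Since the zero object $0$ and the object $P_G$ both belong to $\MonPos$, these kernel and cokernel may be taken in the full subcategory, so exactness holds in $\MonPos$ itself. I expect no real obstacle in this argument; its entire content is the bookkeeping of cone components, and the only step that uses the hypothesis is the pair of equalities $H\cap P_G=P_G$ and $q(P_G)=0$, both immediate from $P_G\subseteq H$.
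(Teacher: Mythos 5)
Your proof is correct. The paper states this proposition without proof (it is introduced as ``easy to see''), and your direct verification --- identifying the cone components $\bar k = 1_{P_G}$ and $\bar q = 0$, applying $P$, and checking that $P_G \xrightarrow{1_{P_G}} P_G \rightarrow 0$ is a kernel--cokernel pair in the pointed category $\MonPos$ --- is exactly the routine argument the paper leaves implicit, with the added care of confirming that the displayed diagram really is a short exact sequence in $\mathsf{PreOrdGrp}$ via the kernel and cokernel descriptions from the Introduction.
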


\section{The universal property of the stable category}

In this section, we prove that the torsion theory ($\mathsf{Grp}$, $\mathsf{Red}\MonPos$) in $\mathsf{\MonPos}$ is the ``best'' one can associate with the pretorsion theory ($\mathsf{Grp(PreOrd)}$, $\mathsf{ParOrdGrp}$) in $\mathsf{PreOrdGrp}$, i.e. we show that it is the \emph{universal} torsion theory associated with ($\mathsf{Grp(PreOrd)}$, $\mathsf{ParOrdGrp}$). 

\begin{theorem} \label{UP of the stable category}
Let $\C$ be a pointed category containing a torsion theory $(\T,\F)$. Let $F \colon \mathsf{PreOrdGrp} \rightarrow \C$ be a functor satisfying the following properties:
\begin{enumerate}
\item $F$ is a torsion theory functor;
\item $F$ preserves short exact sequences of the form \eqref{special SES}.
\end{enumerate}
Then, there exists a unique functor $\hat{F} \colon \MonPos \rightarrow \C$ such that the triangle
\begin{center}
\begin{tikzcd}
\mathsf{PreOrdGrp} \arrow[rr,"P"] \arrow[dr,"F"']
& & \MonPos \arrow[dl,dotted,"{\hat{F}}"]\\
 & \C &
\end{tikzcd}
\end{center}
commutes (up to isomorphism).
\end{theorem}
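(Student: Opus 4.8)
The plan is to construct $\hat{F}$ on objects by exploiting the fact that the positive cone functor $P$ is (essentially) surjective: every object $M \in \MonPos$ arises as $P(G, P_G)$ for some preordered group. Indeed, by Lemma \ref{lemma2 Mon}, given $M \in \MonPos$ the pair $(grp(M), M)$ lies in $\mathsf{PreOrdGrp}$ and satisfies $P(grp(M), M) = M$. So I would \emph{define} $\hat{F}(M) := F(grp(M), M)$ on objects. For morphisms, given $\bar{f} \colon M \to M'$ in $\MonPos$, I would like to say $\hat{F}(\bar{f}) := F(f, \bar{f})$ for a suitable lift $f \colon grp(M) \to grp(M')$; since $grp$ is a functor (the group completion is left adjoint to the forgetful functor from groups to monoids), the canonical lift $grp(\bar{f})$ exists and makes $(grp(\bar{f}), \bar{f})$ a morphism of preordered groups. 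Functoriality of $\hat{F}$ then follows from functoriality of both $grp$ and $F$. This gives a candidate $\hat{F}$ with $\hat{F} \cdot P \cong F$ on the nose for objects of the special form $(grp(M), M)$.

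The main obstacle — and the heart of the proof — is showing that $\hat{F} \cdot P \cong F$ for an \emph{arbitrary} preordered group $(G, P_G)$, not just those of the form $(grp(P_G), P_G)$. Here is where hypotheses (1) and (2) must do their work. For a general $(G, P_G)$, there is a canonical comparison morphism $(grp(P_G), P_G) \to (G, P_G)$ coming from the universal property of $grp(P_G)$ (the inclusion $P_G \rightarrowtail G$ factors through $grp(P_G)$, using Lemma \ref{lemma1 Mon} since $P_G$ embeds in the group $G$), and this morphism is the identity on positive cones. The strategy is to show $F$ sends this comparison to an isomorphism in $\C$. I would analyse the relationship between $G$ and $grp(P_G)$ via the short exact sequence of the form \eqref{special SES}: taking $H = M_G$ (the normal closure of $P_G$ in $G$), the special sequence
\begin{center}
\begin{tikzcd}
P_G \arrow[r,equal] \arrow[d,tail]
& P_G \arrow[r,two heads] \arrow[d,tail]
& 0 \arrow[d,tail]\\
M_G \arrow[r,tail]
& G \arrow[r,two heads]
& G/M_G
\end{tikzcd}
\end{center}
has $G/M_G$ with trivial positive cone, i.e.\ $(G/M_G, 0) \in \z$, so $F(G/M_G, 0) = 0$. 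Property (2) forces $F$ of the left square to be an isomorphism onto its image in $\C$, pinning down $F(G, P_G)$ in terms of data depending only on $P_G$ together with the torsion part $M_G$; comparing the analogous sequence for $(grp(P_G), P_G)$ should then identify $F(G, P_G) \cong F(grp(P_G), P_G) = \hat{F}(P_G)$.

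For \textbf{uniqueness}, I would argue that any $\hat{F}'$ with $\hat{F}' \cdot P \cong F$ is forced on objects, since every $M \in \MonPos$ equals $P(grp(M), M)$, whence $\hat{F}'(M) \cong F(grp(M), M) = \hat{F}(M)$; and forced on morphisms by the same token using that every morphism of $\MonPos$ is $P$ of a morphism of $\mathsf{PreOrdGrp}$ (namely its $grp$-lift). The delicate point to verify is that these forced identifications are natural and compatible, i.e.\ that the coherence isomorphisms assemble into a natural isomorphism $\hat{F}' \cong \hat{F}$; this is a diagram chase using naturality of the unit $\bar f \mapsto grp(\bar f)$ and the fact established above that $F$ inverts the comparison morphisms $(grp(P_G), P_G) \to (G, P_G)$. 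The step I expect to be genuinely hard is the first paragraph of this obstacle discussion: extracting from the mere \emph{preservation} of sequences \eqref{special SES} the stronger conclusion that $F$ collapses the difference between $G$ and $grp(P_G)$, since a priori $F$ could see more of the ambient group $G$ than is recorded in $P_G$.
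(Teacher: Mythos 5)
Your plan follows the paper's own proof quite closely: the same definition $\hat{F}(M) = F(grp(M),M)$ on objects, the same lift $(grp(\bar{f}),\bar{f})$ on morphisms, the same strategy of inverting the comparison morphism by applying hypothesis (2) to a sequence of type \eqref{special SES} whose cokernel is a $\z$-trivial object and then using hypothesis (1), and the same uniqueness argument from surjectivity of $P$ on objects. Even the step you defer as ``a diagram chase'' (the compatibility $f \cdot \sigma = \tau \cdot grp(\bar{f})$ of the comparison maps with an arbitrary lift $(f,\bar{f})$, verified on generators of $grp(M)$) is carried out in the paper exactly as you predict.

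The one genuine gap sits at the step you yourself flag as hard, and your proposed fix --- ``comparing the analogous sequence for $(grp(P_G),P_G)$'' --- does not work as stated: the normal closure of $P_G$ inside $grp(P_G)$ is all of $grp(P_G)$, so the special sequence for $(grp(P_G),P_G)$ degenerates to an identity followed by the map to $(0,0)$ and carries no information. What is actually needed is the observation that the canonical morphism $\alpha \colon grp(P_G) \rightarrow G$ is \emph{injective} with image exactly $M_G$, so that $(M_G,P_G) \cong (grp(P_G),P_G)$ as preordered groups and the kernel map of your special sequence \emph{is}, up to this isomorphism, the comparison morphism $(grp(P_G),P_G) \rightarrow (G,P_G)$. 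Injectivity follows from the defining condition of $\MonPos$: the identity $a + b = b + x$ lets one rewrite $-b + a$ as $x - b$ with $x \in P_G$, hence push all minus signs to the right, so every element of $grp(P_G)$ has the form $p - q$ with $p, q \in P_G$; if $\alpha(p - q) = 0$ then $p = q$ in $G$, hence in $P_G$, hence $p - q = 0$ in $grp(P_G)$. (The image of $\alpha$ is the subgroup of $G$ generated by $P_G$, which equals the normal closure $M_G$ precisely because $P_G$ is closed under conjugation.) With this identification in hand, your own argument closes: by (2), $F(\alpha,1_{P_G})$ is the kernel of $F(q,\bar{q})$; by (1), the $\z$-trivial morphism $(q,\bar{q})$ is sent to a zero morphism; and a kernel of a zero morphism is an isomorphism. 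This also dissolves your final worry that $F$ ``could see more of the ambient group $G$ than is recorded in $P_G$.'' This is exactly how the paper argues, writing the special sequence directly as $(grp(M),M) \rightarrowtail (G,M) \twoheadrightarrow (G/grp(M),0)$ --- the paper, too, leaves the injectivity of $\alpha$ implicit in the assertion that $grp(M)$ ``is a normal subgroup of $G$.''
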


\begin{proof}
Let $M \in \MonPos$. Then, $(grp(M),M) \in \mathsf{PreOrdGrp}$. Let us therefore define $\hat{F}$ on objects as follows: $\hat{F}(M) = F(grp(M),M)$ for any $M \in \MonPos$. It now remains to prove that $F(grp(M),M) \cong F(G,M)$ for any group $G$ such that $(G,M) \in \mathsf{PreOrdGrp}$. This will then prove that $\hat{F} \cdot P \cong F$ on objects. Suppose we have a preordered group $(G,M)$. By the universal property of the Grothendieck group $grp(M)$, there exists a unique group morphism $\alpha \colon grp(M) \rightarrow G$ making the square 
\begin{center}
\begin{tikzcd}
M \arrow[r,equal,"{\bar{\alpha} = 1_M}"] \arrow[d,tail]
& M \arrow[d,tail]\\
grp(M) \arrow[r,dotted,"{\alpha}"']
& G
\end{tikzcd}
\end{center}
commute, which entails that $(\alpha,\bar{\alpha}) \colon (grp(M),M) \rightarrow (G,M)$ is a morphism of preordered groups. We now observe that 
\begin{center}
\begin{tikzcd}
M \arrow[r,equal,"{\bar{\alpha}=1_M}"] \arrow[d,tail]
& M \arrow[r,two heads,"{\bar{q}}"] \arrow[d,tail]
& 0 \arrow[d,tail]\\
grp(M) \arrow[r,tail,"{\alpha}"']
& G \arrow[r,two heads,"q"']
& G/grp(M)
\end{tikzcd}
\end{center}
is a short exact sequence in $\mathsf{PreOrdGrp}$. Indeed, since the monoid $M$ is closed under conjugation in $G$, its group completion $grp(M)$ is then a normal subgroup of $G$. By the assumption $(2)$ on $F$, if we apply the functor $F$ to this sequence, we then obtain a short exact sequence in $\C$. In particular, $F(\alpha, \bar{\alpha}) = \mathsf{ker} (F(q,\bar{q}))$. Notice now that $F(q,\bar{q})$ is the zero morphism in $\C$. Indeed, according to the assumption $(1)$ on the functor $F$, $(q,\bar{q})$ being a $\z$-trivial morphism in $\mathsf{PreOrdGrp}$ is sent by $F$ to the zero morphism. As a conclusion, $F(\alpha,\bar{\alpha})$ is the kernel of its cokernel which is $0$, that is, it is an isomorphism. We deduce that $F(grp(M),M) \cong F(G,M)$.

The definition of $\hat{F} \colon \MonPos \rightarrow \C$ on arrows is now easy. Let $\bar{f} \colon M \rightarrow N$ be a morphism in $\MonPos$. Thanks to the universal property of the Grothendieck group $grp(M)$, there exists a unique group morphism $g \colon grp(M) \rightarrow grp(N)$ such that the square
\begin{center}
\begin{tikzcd}
M \arrow[r,"{\bar{f}}"] \arrow[d,tail]
& N \arrow[d,tail]\\
grp(M) \arrow[r,dotted,"g"']
& grp(N)
\end{tikzcd}
\end{center}
commutes, which means that $(g,\bar{f}) \colon (grp(M),M) \rightarrow (grp(N),N)$ is a morphism of preordered groups. Let us then define $\hat{F}(\bar{f})$ as follows: $\hat{F}(\bar{f}) = F(g,\bar{f})$ where $g$ is the above induced group morphism. It remains to check that $F(g,\bar{f}) \cong F(f,\bar{f})$ for any group morphism $f \colon G \rightarrow H$ such that $(f,\bar{f})$ is a morphism of preordered groups. By the universal property of the Grothendieck groups $grp(M)$ and $grp(N)$, there exist a unique group morphism $\sigma \colon grp(M) \rightarrow G$ and a unique group morphism $\tau \colon grp(N) \rightarrow H$ such that $\sigma \cdot i_M = i_G$ and $\tau \cdot i_N = i_H$:
\begin{center}
\begin{tikzcd}
 & M \arrow[rr,"{\bar{f}}"] \arrow[dl,equal] \arrow[dd,tail,near start,"{i_G}"]
& & N \arrow[dl,equal] \arrow[dd,tail,"{i_H}"]\\
M \arrow[rr,near end,"{\bar{f}}"'] \arrow[dd,tail,"{i_M}"']
& & N \arrow[dd,tail,near end,"{i_N}"']
& \\
 & G \arrow[rr,near start,"f"]
& & H \\
grp(M) \arrow[ur,dotted,"{\sigma}"] \arrow[rr,"g"']
& & grp(N). \arrow[ur,dotted,"{\tau}"']
& 
\end{tikzcd}
\end{center}
We observe that, for any element $x_1 - x_2 + \cdots + x_{n-1} - x_n$ of $grp(M)$,
\begin{align*}
(f \cdot \sigma)&(x_1 - x_2 + \cdots + x_{n-1} - x_n) \\
& = f(x_1 - x_2 + \cdots + x_{n-1} - x_n) \\
& = f(x_1) - f(x_2) + \cdots + f(x_{n-1}) - f(x_n) \\
& = g(x_1) - g(x_2) + \cdots + g(x_{n-1}) - g(x_n) \\
& = g(x_1 - x_2 + \cdots + x_{n-1} - x_n) \\
& = (\tau \cdot g)(x_1 - x_2 + \cdots + x_{n-1} - x_n)
\end{align*}
since $f$ and $g$ coincide on $M$, which means that $f \cdot \sigma = \tau \cdot g$.
In other words, there exist two isomorphisms $F(\sigma,1_M)$ and $F(\tau,1_N)$ in $\C$ (see the first part of the proof) such that $F(f,\bar{f}) \cdot F(\sigma,1_M) = F(\tau,1_N) \cdot F(g,\bar{f})$, which proves that $F(g,\bar{f}) \cong F(f,\bar{f})$. 

Finally, the uniqueness of $\hat{F}$ such that $\hat{F} \cdot P \cong F$ follows from the surjectivity on objects of the positive cone functor $P$.
\end{proof}

Note that the induced functor $\hat{F} \colon \MonPos \rightarrow \C$ of Theorem \ref{UP of the stable category} is itself a torsion theory functor (as is the case for $F$ and $P$):

\begin{proposition}
The induced functor $\hat{F} \colon \MonPos \rightarrow \C$ of Theorem \ref{UP of the stable category} is a torsion theory functor.
\end{proposition}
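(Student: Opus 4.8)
The plan is to unwind the definition of a torsion theory functor relative to the torsion theory $(\mathsf{Grp},\mathsf{Red}\MonPos)$ in $\MonPos$ (Proposition \ref{TT in MonPos}) and the given torsion theory $(\T,\F)$ in $\C$, and to reduce every verification to the corresponding property of $F$ via the relation $\hat{F}\cdot P \cong F$ established in Theorem \ref{UP of the stable category}. Concretely, I must check two things: first, that $\hat{F}(M)\in\T$ whenever $M\in\mathsf{Grp}$ and $\hat{F}(M)\in\F$ whenever $M\in\mathsf{Red}\MonPos$; and second, that $\hat{F}$ sends the canonical short exact sequence $U(M)\rightarrow M\rightarrow M/U(M)$ built in the proof of Proposition \ref{TT in MonPos} to a short exact sequence in $\C$.

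For the first condition I would use the defining formula $\hat{F}(M)=F(grp(M),M)$ and locate the preordered group $(grp(M),M)$ inside the pretorsion theory $(\mathsf{Grp(PreOrd)},\mathsf{ParOrdGrp})$. If $M$ is a group, then $grp(M)=M$, so $\hat{F}(M)=F(M,M)$, and $(M,M)$ has positive cone equal to the whole group $M$, hence lies in $\mathsf{Grp(PreOrd)}$; since $F$ is a torsion theory functor, $F(M,M)\in\T$. If instead $M$ is reduced, then the positive cone of $(grp(M),M)$ is a reduced monoid, so $(grp(M),M)\in\mathsf{ParOrdGrp}$, and again because $F$ is a torsion theory functor we get $\hat{F}(M)=F(grp(M),M)\in\F$.

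For the second condition, the key observation is that the canonical short exact sequence of $M$ in $\MonPos$ is precisely the image under the positive cone functor $P$ of the canonical short $\z$-exact sequence of $(grp(M),M)$ from Corollary \ref{canonical short z-exact sequence}. Indeed, for the preordered group $(grp(M),M)$ one has $N_{grp(M)}=\{x\in grp(M)\mid x\in M \text{ and } -x\in M\}=U(M)$, so the canonical $\z$-exact sequence of $(grp(M),M)$ has top row exactly $U(M)\rightarrow M\rightarrow M/U(M)$, i.e. it is sent by $P$ to the canonical sequence of $M$. Since $F$ is a torsion theory functor it sends this canonical $\z$-exact sequence to a short exact sequence in $\C$, and since $\hat{F}\cdot P\cong F$, the image under $\hat{F}$ of the canonical sequence of $M$ is isomorphic (as a diagram) to that short exact sequence. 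As short exact sequences are stable under isomorphism of diagrams in a pointed category, $\hat{F}$ sends the canonical sequence of $M$ to a short exact sequence, completing the proof.

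The main obstacle I expect is the bookkeeping in this last step: one must verify carefully the identification $U(M)=N_{grp(M)}$ and that $P$ applied to the canonical $\z$-exact sequence of $(grp(M),M)$ really reproduces the sequence $U(M)\rightarrow M\rightarrow M/U(M)$, and then invoke the naturality of the isomorphism $\hat{F}\cdot P\cong F$ on the two morphisms of the sequence so that the whole three-term diagram, and not merely its objects, is transported to a short exact sequence.
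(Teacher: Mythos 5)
Your proposal is correct and follows essentially the same route as the paper: the first condition is checked exactly as in the paper via $\hat{F}(M)=F(grp(M),M)$ and the membership of $(grp(M),M)$ in $\mathsf{Grp(PreOrd)}$ or $\mathsf{ParOrdGrp}$, and the second condition rests on the same identification $N_{grp(M)}=U(M)$, so that the canonical sequence $U(M)\rightarrow M\rightarrow M/U(M)$ is the $P$-image of the canonical short $\z$-exact sequence of $(grp(M),M)$, which $F$ sends to a short exact sequence in $\C$. Your final step (invoking naturality of $\hat{F}\cdot P\cong F$ and stability of short exact sequences under isomorphism of diagrams) is just a more explicit rendering of the paper's statement that applying $\hat{F}$ to the canonical sequence ``amounts to'' applying $F$ to the $\z$-exact sequence.
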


\begin{proof}
We first easily check that 
\begin{itemize}
\item for any $M \in \mathsf{Grp}$, $\hat{F}(M) = F(grp(M),M) \in \T$ since $(grp(M),M) \in \mathsf{Grp(PreOrd)}$ and $F$ is a torsion theory functor;
\item for any $M \in \mathsf{Red}\MonPos$, $\hat{F}(M) = F(grp(M),M) \in \F$ since $(grp(M),M) \in \mathsf{ParOrdGrp}$ and $F$ is a torsion theory functor. 
\end{itemize}

Consider now the canonical short exact sequence associated with any $M \in \MonPos$ in the torsion theory ($\mathsf{Grp}$, $\mathsf{Red}\MonPos$) (see Proposition \ref{TT in MonPos}):
\begin{equation} \label{hatF is a TTF diag1}
\begin{tikzcd}
U(M) \arrow[r,"{\kappa_M}"]
& M \arrow[r,two heads,"{\eta_M}"]
& M/U(M).
\end{tikzcd}
\end{equation}
Then,
\begin{equation} \label{hatF is a TTF diag2}
\begin{tikzcd}[column sep = large]
U(M) \arrow[r,tail,"{\kappa_M}"] \arrow[d,tail]
& M \arrow[r,two heads,"{\eta_M}"] \arrow[d,tail]
& M/U(M) \arrow[d,tail]\\
grp(M) \arrow[r,equal]
& grp(M) \arrow[r,two heads,"{\eta_{grp(M)}}"']
& grp(M)/U(M) 
\end{tikzcd}
\end{equation}
is the canonical short $\z$-exact sequence associated with the preordered group $(grp(M),M)$ in the pretorsion theory ($\mathsf{Grp(PreOrd)}$, $\mathsf{ParOrdGrp}$) (see Corollary \ref{canonical short z-exact sequence}). Since $F$ is a torsion theory functor, it sends this short $\z$-exact sequence in $\mathsf{PreOrdGrp}$ to a short exact sequence in $\C$. Now, applying the functor $\hat{F}$ to the short exact sequence \eqref{hatF is a TTF diag1} amounts to applying the functor $F$ to the short $\z$-exact sequence \eqref{hatF is a TTF diag2}, so that the short exact sequence \eqref{hatF is a TTF diag1} is sent by $\hat{F}$ to a short exact sequence in $\C$. 
\end{proof}

\vspace*{1cm}

\paragraph{\textbf{Comment.} The content of this paper is part of the author's PhD Thesis, defended in June 2023 at the \emph{Université catholique de Louvain} \cite{MA}. }

\paragraph{\textbf{Acknowledgements.} The author warmly thanks Professor George Janelidze for the fruitful discussions they had on the subject of the article in March 2023.}

\begin{bibdiv}

\begin{biblist}

\bib{BCG}{article}{
   author={Borceux, F.},
   author={Campanini, F.},
   author={Gran, M.},
   title={The stable category of preorders in a pretopos I: General theory},
   journal={J. Pure Appl. Algebra 226},
   date={2022},
   pages={106997},
}

\bib{BCG22}{article}{
   author={Borceux, F.},
   author={Campanini, F.},
   author={Gran, M.},
   title={The stable category of preorders in a pretopos II: the universal property},
   journal={Ann. Mat. Pura Appl. 201},
   date={2022},
   pages={2847-2869},
}

\bib{BCG22'}{article}{
   author={Borceux, F.},
   author={Campanini, F.},
   author={Gran, M.},
   title={Pretorsion theories in lextensive categories},
   journal={Accepted in Israel Journal of Mathematics},
   date={2022},
   pages={},
}

\bib{BG}{article}{
   author={Bourn, D.},
   author={Gran, M.},
   title={Torsion theories in homological categories},
   journal={J. Algebra 305},
   date={2006},
   pages={18--47},
}

\bib{CDT}{article}{
   author={Clementino, M. M.},
   author={Dikranjan, D.},
   author={Tholen, W.},
   title={Torsion theories and radicals in normal categories},
   journal={J. Algebra 305},
   date={2006},
   pages={98--129},
}

\bib{CMFM}{article}{
   author={Clementino, M. M.},
   author={Martins-Ferreira, N.},
   author={Montoli, A.},
   title={On the categorical behaviour of preordered groups},
   journal={J. Pure Appl. Algebra 223},
   date={2019},
   pages={4226-4225},
}

\bib{FF}{article}{
   author={Facchini, A.},
   author={Finocchiaro, C.},
   title={Pretorsion theories, stable category and preordered sets},
   journal={Ann. Mat. Pura Appl. 199},
   date={2020},
   pages={1073-1089},
}

\bib{FFG}{article}{
   author={Facchini, A.},
   author={Finocchiaro, C.},
   author={Gran, M.},
   title={Pretorsion theories in general categories},
   journal={J. Pure Appl. Algebra 225},
   date={2021},
   pages={106503},
}

\bib{GM}{article}{
   author={Gran, M.},
   author={Michel, A.},
   title={Torsion theories and coverings of preordered groups},
   journal={Algebra Univers. 82 (22)},
   date={2021},
   pages={},
}

\bib{GJM}{article}{
   author={Grandis, M.},
   author={Janelidze, G.},
   author={M\'arki, L.},
   title={Non-pointed exactness, radicals, closure operators},
   journal={J. Aust. Math. Soc. 94},
   date={2013},
   pages={348-361},
}

\bib{JZ10}{article}{
   author={Janelidze, Z.},
   title={The pointed subobject functor, {$3\times3$} lemmas, and subtractivity of spans},
   journal={Theory Appl. Categ. 23 (11)},
   date={2010},
   pages={221--242},
}

\bib{MA}{article}{
   author={Michel, A.},
   title={Torsion theories and Galois theories in preordered groups},
   journal={PhD Thesis, Université catholique de Louvain},
   date={2023},
   pages={\url{https://dial.uclouvain.be/pr/boreal/object/boreal:277738}},
}

\end{biblist}

\end{bibdiv}

\end{document}